\newtheorem{Thm}{Theorem}[section]
\newtheorem{Ex}[Thm]{Example}
\newtheorem{Rmk}[Thm]{Remark}
\newtheorem{Conj}[Thm]{Conjecture}
\date{}
\newcommand{\QQ}{{\mathbb{Q}}}
\newcommand{\RR}{{\mathbb{R}}}
\newcommand{\val}{\text{val}}
\newcommand{\GT}{{\mathcal{G}_{2,n}}}
\newcommand{\TN}{{\mathcal{T}_{n}}}
\newcommand{\TT}{{\mathcal{T}}}
\newcommand{\trop}{{\text{trop}}}
\begin{document}

\title{\textbf{On the relation between weighted trees and tropical Grassmannians}}
\author{Filip Cools \footnote{K.U.Leuven, Department of Mathematics,
Celestijnenlaan 200B, B-3001 Leuven, Belgium, email:
Filip.Cools@wis.kuleuven.be; the author is a postdoctoral fellow of the Research Foundation - Flanders (FWO)}}

\maketitle {\footnotesize \emph{\textbf{Abstract.---} In this article, we will prove that the set of $4$-dissimilarity vectors of $n$-trees is contained in the tropical Grassmannian $\mathcal{G}_{4,n}$. We will also propose three equivalent conjectures related to the set of $m$-dissimilarity vectors of $n$-trees for the case $m\geq 5$. Using a computer algebra system, we can prove these conjectures for $m=5$.\\ \\
\indent \textbf{MSC.---} 05C05, 05C12, 14M15}}
\\ ${}$

\section{Introduction}

Let $T$ be a tree with $n$ leaves, which are numbered by the set $[n]:=\{1,\ldots,n\}$. Such a tree is called an $n$-tree. We assume that $T$ is weighted, so each edge has a length.  Denote by $D(i,j)$ the distance between the leaves $i$ and $j$ (i.e. the sum of the lengths of the edges of the unique path in $T$ from $i$ to $j$). We say that $D=(D(i,j))_{i,j}\in \RR^{n\times n}$ is the dissimilarity matrix of $T$, or conversely, that $D$ is realized by $T$. The set of dissimilarity matrices of $n$-trees is fully described by the following theorem (see \cite{Bun} or \cite[Theorem 2.36]{PaSt}).

\begin{Thm}[Tree Metric Theorem] \label{thm tree metric}
Let $D\in \RR^{n\times n}$ be a symmetric matrix with zero entries on the main diagonal. Then $D$ is a dissimilarity matrix of an $n$-tree if and only if the \textit{four-point condition} holds, i.e. for every four (not necessarily distinct) elements $i,j,k,l\in [n]$, the maximum
of the three numbers $D(i,j)+D(k,l)$, $D(i,k)+D(j,l)$ and
$D(i,l)+D(j,k)$ is attained at least twice. Moreover, the $n$-tree $T$ that realizes $D$ is unique.
\end{Thm}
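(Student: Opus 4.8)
The plan is to prove the two implications separately — the ``only if'' direction by a direct inspection of quartet subtrees, and the ``if'' direction (together with uniqueness) by induction on $n$.

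\emph{Necessity of the four-point condition.} Suppose $D$ is realized by an $n$-tree $T$ and fix $i,j,k,l\in[n]$. If two of these indices coincide the four-point condition degenerates to the triangle inequality $D(i,k)+D(j,k)\ge D(i,j)$, which holds in any tree metric, so assume $i,j,k,l$ are distinct. Let $T_0\subseteq T$ be the smallest subtree containing $i,j,k,l$: it is a quartet tree, so it is either a star (one interior vertex) or has two interior vertices joined by an interior edge of length $e\ge 0$, with pairing one of $\{ij|kl\},\{ik|jl\},\{il|jk\}$. Expressing the six leaf distances in terms of the (at most) four pendant lengths and $e$, the two ``cross'' sums both equal $(\text{sum of pendant lengths})+2e$ while the ``split'' sum equals the sum of pendant lengths; hence the maximum of $D(i,j)+D(k,l)$, $D(i,k)+D(j,l)$, $D(i,l)+D(j,k)$ is attained at least twice (exactly twice when $e>0$, three times when $e=0$).

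\emph{Sufficiency, and uniqueness, by induction on $n$.} The cases $n\le 3$ are immediate: the four-point condition with repeated indices forces $D\ge 0$ and the triangle inequality, so for $n=3$ the numbers $\tfrac12(D(1,2)+D(1,3)-D(2,3))$ and its two cyclic analogues are nonnegative and sum in pairs to the $D(i,j)$, whence the tripod with these pendant lengths is the unique tree realizing $D$. Assume the statement for $n-1$ and let $D$ on $[n]$ satisfy the four-point condition. Its restriction $D'$ to $[n-1]$ does too, so by induction $D'$ is realized by a unique $(n-1)$-tree $T'$, and it remains to attach leaf $n$. Choose $p,q\in[n-1]$ minimizing $g(i,j):=D(i,n)+D(j,n)-D(i,j)$, put $\ell:=\tfrac12 g(p,q)\ge 0$, and let $x$ be the point of the $p$--$q$ path in $T'$ at distance $D(p,n)-\ell$ from $p$; the triangle inequality gives $0\le D(p,n)-\ell\le D(p,q)$, so $x$ is well defined (it may subdivide an edge). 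Let $T$ be $T'$ with a new pendant edge of length $\ell$ from $x$ to a new leaf $n$; then $T$ restricts to $T'$ on $[n-1]$, so $D_T=D$ there, and $D_T(p,n)=D(p,n)$, $D_T(q,n)=D(q,n)$ by construction.

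The crux is the identity $D_T(k,n)=D(k,n)$ for the remaining leaves $k$, and this is where the four-point condition on the quadruples $\{p,q,k,n\}$ is really used. Let $q_k$ be the median of $p,q,k$ in $T'$, a point on the $p$--$q$ path. Minimality of $g(p,q)$ gives $g(p,k)\ge g(p,q)$ and $g(q,k)\ge g(p,q)$, which rearrange to $D(p,q)+D(k,n)\ge D(p,k)+D(q,n)$ and $D(p,q)+D(k,n)\ge D(p,n)+D(q,k)$; thus $D(p,q)+D(k,n)$ is the maximum of the three sums on $\{p,q,k,n\}$, so by the four-point condition it equals at least one of the other two. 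Comparing $D_{T'}(p,x)=\tfrac12(D(p,n)-D(q,n)+D(p,q))$ with $D_{T'}(p,q_k)=\tfrac12(D(p,q)+D(p,k)-D(q,k))$ shows $x$ lies (weakly) between $p$ and $q_k$ exactly when $D(p,n)+D(q,k)\le D(p,k)+D(q,n)$, in which case $D(p,q)+D(k,n)=D(p,k)+D(q,n)$, and between $q_k$ and $q$ otherwise; a one-line computation of $D_T(k,n)=\ell+D_{T'}(x,k)$ along the path through $q_k$ then gives $D(p,k)+D(q,n)-D(p,q)$ in the first case and $D(q,k)+D(p,n)-D(p,q)$ in the second, and each equals $D(k,n)$. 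Finally, for uniqueness: in any $n$-tree realizing $D$, deleting leaf $n$ and suppressing the resulting degree-two vertex yields an $(n-1)$-tree realizing $D'$, hence $T'$ by induction, and then the distances $D(\cdot,n)$ force the pendant length of $n$ to be $\ell$ and its attachment point to be $x$. The main obstacle is precisely the case analysis above; the only remaining care concerns degenerate configurations (an edge length forced to $0$, or $x$ landing on an existing vertex of $T'$), which are a matter of convention on what counts as a realizing tree and do not affect the distances.
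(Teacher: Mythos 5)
Your proof is correct: the quartet-inspection argument for necessity and the leaf-attachment induction for sufficiency and uniqueness (choosing $p,q$ minimizing $D(i,n)+D(j,n)-D(i,j)$ and grafting leaf $n$ at the appropriate point of the $p$--$q$ path) go through, and your case analysis via the median $q_k$ is the right way to see that the four-point condition on $\{p,q,k,n\}$ forces $D_T(k,n)=D(k,n)$. The paper itself does not prove this theorem but cites Buneman and Pachter--Sturmfels, and your argument is essentially the standard proof given there, so there is nothing to reconcile; the only point handled somewhat briefly is uniqueness (one should note that the minimum of $D(i,n)+D(j,n)-D(i,j)$ pins down both the pendant length and the attachment point in \emph{any} realizing tree, modulo the usual conventions on degree-two vertices and zero-length edges), which you acknowledge.
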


If $T$ is an $n$-tree, $(D(i,j))_{i<j}\in \RR^{n\choose 2}$ is called the dissimilarity vector of $T$.

We can reformulate the above theorem in the context of tropical geometry (see \cite[Theorem 4.2]{SS1}). For some background, I refer to section \ref{section tropical}.

\begin{Thm}\label{thm space trees} The set $\TN$ of dissimilarity vectors of $n$-trees is equal to the tropical Grassmannian $\mathcal{G}_{2,n}$. \end{Thm}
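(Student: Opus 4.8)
\emph{Proof proposal.}\ I would prove the two inclusions $\TN\subseteq\GT$ and $\GT\subseteq\TN$ separately. Recall (Section~\ref{section tropical}) that $\GT\subseteq\RR^{n\choose 2}$ consists of those $w$ at which $\trop(f)$ attains its optimum at least twice for every $f$ in the Plücker ideal of $G(2,n)$, and that, by the Fundamental Theorem of tropical geometry, the valuation vector $\big(\val p_{ij}(P)\big)_{i<j}$ of every point $P\in G(2,n)(K)$ lies in $\GT$, for $K$ a valued field with value group $\RR$ (e.g.\ the Hahn series $\CC((t^{\RR}))$). The first description will give ``$\GT\subseteq\TN$'', the second ``$\TN\subseteq\GT$''. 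Signs are normalized as in Section~\ref{section tropical} so that the tropicalization of the three-term Plücker relation $p_{ij}p_{kl}-p_{ik}p_{jl}+p_{il}p_{jk}$ at $w$ says exactly that the optimum among $w_{ij}+w_{kl}$, $w_{ik}+w_{jl}$, $w_{il}+w_{jk}$ is attained at least twice --- that is, the four-point condition of Theorem~\ref{thm tree metric} for $i,j,k,l$.

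\emph{The inclusion $\GT\subseteq\TN$.}\ If $w\in\GT$, then applying the preceding remark to the three-term Plücker quadrics (which lie in the Plücker ideal) shows that $w$ satisfies the four-point condition for every quadruple of distinct indices; for a quadruple with a repeated index two of the three sums agree and the condition is automatic. Since $w$ is a symmetric array with zero diagonal, Theorem~\ref{thm tree metric} yields an $n$-tree realizing $w$, so $w\in\TN$.

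\emph{The inclusion $\TN\subseteq\GT$.}\ Let $D$ be the dissimilarity vector of an $n$-tree $T$. It suffices to build a point $P\in G(2,n)(K)$ with $\val p_{ij}(P)=\pm D(i,j)$ for all $i<j$ (the sign being the one fixed in Section~\ref{section tropical}): because \emph{every} rank-$2$ matrix in $K^{2\times n}$ represents a point of $G(2,n)$, no Plücker relation has to be checked, only the valuations of the $2\times 2$ minors. Root $T$ at a leaf; for a vertex $a$ let $h(a)$ be its distance from the root, and for leaves $i\neq j$ let $m_{ij}$ be the vertex at which the root-to-$i$ and root-to-$j$ paths first separate, so that $D(i,j)=h(i)+h(j)-2h(m_{ij})$. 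Among any three leaves two of the three meets coincide and attain the least height, whence $2h(m_{ij})\geq\min(2h(m_{ik}),2h(m_{jk}))$ for all distinct $i,j,k$; this ``ultrametric'' inequality is exactly what permits a choice of $x_1,\dots,x_n\in K$ with $\val(x_i-x_j)=\pm 2h(m_{ij})$. Explicitly one may take $x_i=\sum_e\zeta_e\,t^{\pm 2h(a_e)}$, summed over the edges $e$ of the root-to-$i$ path, with $a_e$ the endpoint of $e$ nearer the root and $\zeta_e\in\CC^{\times}$ generic. Then the matrix $M$ with $i$-th column $t^{\mp h(i)}(1,x_i)^{\top}$ has $2\times 2$ minors $p_{ij}=t^{\mp(h(i)+h(j))}(x_j-x_i)$, of valuation $\pm\big(h(i)+h(j)-2h(m_{ij})\big)=\pm D(i,j)$; hence $D\in\GT$.

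\emph{The main obstacle.}\ The delicate step is the construction of the $x_i$: one has to verify that the genericity of the $\zeta_e$ really prevents any cancellation of leading terms, so that $\val(x_i-x_j)$ is $\pm 2h(m_{ij})$ \emph{exactly}, and --- more mundanely --- to keep the min/max and sign conventions of Section~\ref{section tropical} aligned on the tree-metric side and on the Grassmannian side throughout. An alternative route for ``$\TN\subseteq\GT$'', avoiding the explicit point, is to prove that the three-term Plücker relations form a \emph{tropical basis} for the ideal of $G(2,n)$: then $\TN$ and $\GT$ are both defined by the four-point condition and the theorem follows at once, but establishing that tropical basis property is of comparable difficulty.
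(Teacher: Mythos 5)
Your overall plan is sound and, for the inclusion $\TN\subseteq\GT$, genuinely different from what the paper does: the paper simply invokes \cite[Theorem 4.2]{SS1}, and its only indication of proof is the remark that the three-term Pl\"ucker quadrics $p_{ijkl}$ form a tropical basis of $I_{2,n}$, so that $\GT$ is cut out by the tropicalized quadrics and Theorem \ref{thm tree metric} applies. You replace that (nontrivial, and in your write-up unproved) tropical basis property by the two halves of Theorem \ref{Thm def tropical variety}: the easy containment of the tropical variety in the hypersurfaces $\mathcal{H}(\trop(p_{ijkl}))$ for one inclusion, and an explicit point of $G(2,n)(K)$ with prescribed valuations of its $2\times 2$ minors for the other. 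The explicit lift is correct: the common initial segment of the two root-to-leaf paths cancels in $x_i-x_j$, the two edges issuing from the meet $m_{ij}$ contribute the term of valuation $2h(m_{ij})$, which survives for generic $\zeta_e$, and the column rescaling then gives $-\val(p_{ij})=D(i,j)$. This is in effect the $m=2$ case of the construction the paper itself uses later to prove Theorem \ref{thm main}; note that you can avoid Hahn series and stay within Theorem \ref{Thm def tropical variety} by first treating rational edge lengths and then passing to the closure, exactly as the paper does for $m=4$.

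The one step that fails as written is in the inclusion $\GT\subseteq\TN$: the claim that the four-point condition is ``automatic'' for quadruples with a repeated index. With $l=k$ the three sums are $D(i,j)$, $D(i,k)+D(j,k)$, $D(i,k)+D(j,k)$, so that instance of the condition is the triangle inequality, and with $i=k$, $j=l$ it forces $D(i,j)\geq 0$; neither holds on all of $\GT$. Indeed $\GT$ is invariant under $w_{ij}\mapsto w_{ij}+\lambda_i+\lambda_j$ (scale the $i$-th column of a matrix by $t^{-\lambda_i}$), and such a translate of an honest tree metric violates the triangle inequality once some $\lambda_k$ is sufficiently negative. So you cannot invoke Theorem \ref{thm tree metric} verbatim for an arbitrary $w\in\GT$; you need the variant of the tree-realization theorem that assumes the four-point condition only for distinct $i,j,k,l$ and realizes $w$ by a tree with nonnegative internal edge lengths but possibly negative leaf-edge lengths (this is the convention of \cite{SS1}, and it is the reading under which Theorem \ref{thm space trees} is literally true; your construction for the other inclusion still works for such trees, since only internal edges enter the exponents of the $x_i$). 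With that substitution your argument goes through; the paper's own remark silently makes the same identification.
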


We can generalize the definition of dissimilarity vectors of $n$-trees. Let $m$ be an integer with $2\leq m<n$ and let $i_1,\ldots,i_m$ be pairwise distinct elements of $\{1,\ldots,n\}$. Denote by $D(i_1,\ldots,i_m)$ the length of the smallest subtree of $T$ containing the leaves $i_1,\ldots,i_m$. We say that the point $D=(D(i_1,\ldots,i_m))_{i_1<\ldots<i_m}\in \RR^{n\choose m}$ is the $m$-dissimilarity vector of $T$.

The following result gives a formula for computing the $m$-subtree weights from the pairwise distances of the leafs of an $n$-tree (see \cite[Theorem 3.2]{BoCo}).

\begin{Thm} \label{thm phi}
Let $n$ and $m$ be integers such that $2\leq m<n$. Denote by $\mathcal{C}_m\subset \mathcal{S}_m$ the set of cyclic permutations of length $m$. Let
$$\phi^{(m)}:\RR^{n \choose 2}\to \RR^{n \choose m}:
X=(X_{i,j})\mapsto (X_{i_1,\ldots,i_m})$$ be the map with
$$
X_{i_1,\ldots,i_m}=\frac12 \cdot \min_{\sigma \in \mathcal{C}_m}
\{X_{i_1,i_{\sigma(1)}}+X_{i_{\sigma(1)},i_{\sigma^2(1)}}+\ldots+X_{i_{\sigma^{m-1}(1)},i_{\sigma^m(1)}}\}.
$$
If $D\in \TN\subset \mathbb{R}^{n \choose 2}$ is the dissimilarity vector of an $n$-tree $T$, then
the $m$-dissimilarity vector of $T$ is equal to $\phi^{(m)}(D)$. So $\phi^{(m)}(\TN)$ is the set of $m$-dissimilarity vectors of $n$-trees.
\end{Thm}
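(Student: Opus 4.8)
\section*{Proof proposal}

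The plan is to pass to the subtree $T'$ of $T$ spanned by the chosen leaves $i_1,\ldots,i_m$, and to write $L$ for its total edge length, so that $D(i_1,\ldots,i_m)=L$ by definition. Since the unique $i_a$--$i_b$ path in $T$ already lies in $T'$, the distance $D(i_a,i_b)$ may be computed inside $T'$ as well. The key observation is that, for a cyclic permutation $\sigma\in\mathcal{C}_m$, the quantity
$$
D(i_1,i_{\sigma(1)})+D(i_{\sigma(1)},i_{\sigma^2(1)})+\cdots+D(i_{\sigma^{m-1}(1)},i_{\sigma^m(1)})
$$
(reading indices cyclically, so $\sigma^m(1)=1$) is exactly the length of the closed walk $W_\sigma$ in $T'$ obtained by concatenating the unique paths between cyclically consecutive leaves. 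Thus the theorem reduces to the combinatorial identity $\min_{\sigma\in\mathcal{C}_m}\operatorname{length}(W_\sigma)=2L$, and the final sentence of the statement then follows formally: every $n$-tree $T$ has its dissimilarity vector $D$ in $\TN$ by Theorem \ref{thm tree metric}, and we will have shown that the $m$-dissimilarity vector of $T$ is $\phi^{(m)}(D)$.

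For the inequality ``$\geq$'', I would use that \emph{any closed walk in a tree traverses each edge an even number of times}: crossing an edge switches which of the two components of $T'\setminus\{e\}$ one is in, and the walk returns to its start. Given an edge $e$ of $T'$, both components of $T'\setminus\{e\}$ must contain at least one of $i_1,\ldots,i_m$, for otherwise the component meeting none of them could be removed, contradicting the minimality of $T'$. Hence $W_\sigma$ visits vertices on both sides of $e$ and therefore crosses $e$ at least, and so at least twice; summing $\ell(e)\cdot(\#\text{crossings of }e)$ over all edges of $T'$ gives $\operatorname{length}(W_\sigma)\geq 2L$ for every $\sigma$.

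For the inequality ``$\leq$'', I would exhibit one cyclic order attaining the bound. Start from a depth-first (Euler) tour of $T'$: a closed walk $W$ that crosses every edge exactly twice, hence has length $2L$, and that visits all of $i_1,\ldots,i_m$. Recording the occurrences of $i_1,\ldots,i_m$ along $W$ produces a cyclic word in which, between two consecutive occurrences, $W$ travels a subwalk no shorter than the corresponding tree distance; summing, the total of consecutive distances along this cyclic word is at most $\operatorname{length}(W)=2L$. It remains to turn this cyclic word into a genuine element of $\mathcal{C}_m$: whenever some leaf occurs more than once, delete a repeated occurrence and join its two neighbours directly. By the triangle inequality---a consequence of the four-point condition in Theorem \ref{thm tree metric}---the total does not increase, and since a leaf is never deleted for the last time, after finitely many steps each of the $m$ leaves occurs exactly once. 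The resulting $m$-cycle $\sigma$ satisfies $\operatorname{length}(W_\sigma)\leq 2L$, which together with the previous paragraph gives the identity.

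I expect the main obstacle to be the upper bound---specifically, making precise that the shortcutting procedure on the Euler-tour word terminates with a cyclic permutation of length exactly $m$ while never increasing the total; the lower bound is a routine parity-of-crossings count.
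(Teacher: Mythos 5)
Your argument is correct, but note that the paper itself contains no proof of this statement to compare against: it is imported verbatim from \cite{BoCo} (Theorem 3.2 there), and the argument given in that reference is essentially the same standard double count that you give --- lower bound by counting crossings of each edge of the spanned subtree $T'$, upper bound by exhibiting a cyclic order coming from a tour of $T'$ that uses each edge exactly twice. Two remarks on your write-up. First, the step you flag as the main obstacle is not needed: each $i_k$ is a leaf of $T$, hence has degree at most one in the subtree $T'$ and so is a leaf of $T'$; consequently it occurs exactly once along the closed Euler (DFS) tour, the recorded cyclic word is already a genuine cyclic permutation of $\{i_1,\dots,i_m\}$, and the shortcutting procedure (and with it the appeal to the triangle inequality via the four-point condition) can be deleted. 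Indeed, between consecutive leaf occurrences the DFS tour travels a path through the meet of the two leaves, i.e.\ the geodesic, so your chosen $\sigma$ attains $2L$ exactly. Second, both of your inequalities silently use that edge lengths are nonnegative (the parity count gives $\ell(e)\cdot c_e\geq 2\ell(e)$ only when $\ell(e)\geq 0$, and ``subwalk length $\geq$ distance'' likewise); the paper is not explicit on this, and in the tropical setting of Theorem \ref{thm space trees} pendant edges may carry negative lengths. This is harmless --- a pendant edge of $T'$ lies on exactly the two geodesics incident to its leaf, so it is traversed exactly twice in every cyclic tour, and only internal edges (which do have nonnegative length for points of $\TN$) enter the $\geq 2$ crossing estimate --- but it is worth a sentence if you want the proof to cover the full generality in which the paper applies the theorem.
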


The description of the set of $m$-dissimilarity vectors of $n$-trees as the image of $\TN$ under the map $\phi^{(m)}$ is not useful to decide wether or not a given point in $\RR^{n\choose m}$ is an $m$-dissimilarity vector. So we are interested in finding a nice description of these sets as subsets of $\RR^{n\choose m}$. The case $m=3$ is solved by the following result (see \cite[Theorem 4.6]{BoCo}).

\begin{Thm} $\phi^{(3)}(\TN)=\mathcal{G}_{3,n}\cap \phi^{(3)}(\RR^{n\choose 2})$. \end{Thm}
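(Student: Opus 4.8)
The plan is to prove the two inclusions separately. One of them, $\phi^{(3)}(\TN)\subseteq\phi^{(3)}(\RR^{n\choose 2})$, is automatic, so the real content of ``$\subseteq$'' is that $\phi^{(3)}(\TN)\subseteq\mathcal{G}_{3,n}$, and the content of ``$\supseteq$'' is that a point of $\mathcal{G}_{3,n}$ which happens to lie in the image of $\phi^{(3)}$ is the $3$-dissimilarity vector of a genuine $n$-tree. Throughout I would use that, for $m=3$, the map $\phi^{(3)}$ of Theorem \ref{thm phi} is in fact \emph{linear}: the two cyclic permutations of length $3$ yield the same sum, so $\phi^{(3)}(X)_{ijk}=\tfrac12(X_{ij}+X_{ik}+X_{jk})$ for every $X\in\RR^{n\choose 2}$.

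For $\phi^{(3)}(\TN)\subseteq\mathcal{G}_{3,n}$ I would use a degree-$2$ Veronese lift. Let $D\in\TN$; replacing $D$ by $\tfrac12 D$ (still a tree metric), it suffices to produce a point of $\mathcal{G}_{3,n}$ with coordinates $D(i,j)+D(i,k)+D(j,k)$. By Theorem \ref{thm space trees}, $D\in\TN=\mathcal{G}_{2,n}$, so, working over an algebraically closed valued field $K$ with value group $\RR$, the Fundamental Theorem of tropical geometry gives a $2\times n$ matrix over $K$ with columns $v_1,\dots,v_n$ whose Plücker coordinates $p_{ij}=\det(v_i\,|\,v_j)$ satisfy $\val(p_{ij})=D(i,j)$. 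Applying the degree-$2$ Veronese to each column, $v=(x,y)\mapsto\widetilde v=(x^2,xy,y^2)$, produces a $3\times n$ matrix, i.e.\ a point $[\widetilde p]\in G(3,n)(K)$, and the classical identity $\det(\widetilde v_i\,|\,\widetilde v_j\,|\,\widetilde v_k)=\det(v_i\,|\,v_j)\,\det(v_j\,|\,v_k)\,\det(v_i\,|\,v_k)$ (up to sign; both sides are polynomials of the same multidegree with the same zero locus, and one substitution fixes the constant) gives $\val(\widetilde p_{ijk})=D(i,j)+D(i,k)+D(j,k)$. Hence this vector lies in $\mathcal{G}_{3,n}$, as wanted.

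For the reverse inclusion, let $w=\phi^{(3)}(X)\in\mathcal{G}_{3,n}$ with $X\in\RR^{n\choose 2}$. The key point is the interaction with the three-term Plücker relations of $G(3,n)$. For distinct indices $i,a,b,c,d$ the relation attached to the singleton $\{i\}$ asserts, after tropicalisation, that the minimum of $w_{iab}+w_{icd}$, $w_{iac}+w_{ibd}$, $w_{iad}+w_{ibc}$ is attained at least twice; substituting $w_{pqr}=\tfrac12(X_{pq}+X_{pr}+X_{qr})$, the common summand $\tfrac12(X_{ia}+X_{ib}+X_{ic}+X_{id})$ cancels and what is left is exactly the four-point condition for the four leaves $a,b,c,d$. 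Since every point of $\mathcal{G}_{3,n}$ satisfies the tropicalisation of every Plücker relation, and since for $n\ge 5$ each $4$-subset of $[n]$ omits some index $i$, we conclude that $X$ satisfies the four-point condition for \emph{all} quadruples of leaves; by the Tree Metric Theorem (Theorem \ref{thm tree metric}), $X\in\TN$, so $w=\phi^{(3)}(X)\in\phi^{(3)}(\TN)$. The case $n=4$ must be handled separately, but it is immediate: there $G(3,4)$ is all of $\PP^3$, so $\mathcal{G}_{3,4}$ is the whole space, while the star-tree metrics $D_{ij}=a_i+a_j$ (which satisfy the four-point condition trivially) already realise every vector in $\phi^{(3)}(\RR^{4\choose 2})$, since $(a_i)\mapsto(a_i+a_j+a_k)_{i<j<k}$ is invertible when $n=4$.

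I expect the inclusion $\phi^{(3)}(\TN)\subseteq\mathcal{G}_{3,n}$ to be the main obstacle: checking tropical Plücker relations alone would only place $\phi^{(3)}(\TN)$ inside the tropical pre-variety (the Dressian), which is strictly larger than $\mathcal{G}_{3,n}$ for $n\ge 6$, so one genuinely needs points on the Grassmannian, and the Veronese lift supplies them. It is precisely the numerical coincidence $D(i,j)+D(i,k)+D(j,k)=2\,\phi^{(3)}(D)_{ijk}$ that makes the lift land on the correct tropical point, and the analogous proportionality fails for $m\ge 4$, which is presumably why the case $m=4$ treated in this paper needs a different argument.
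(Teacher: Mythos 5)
This theorem is not proved in the paper at all --- it is quoted from \cite{BoCo} --- so the comparison is with the cited proof and with the technique this paper uses for the analogous $m=4$ statement. Your two-inclusion structure is essentially the known route. For $\phi^{(3)}(\TN)\subseteq\mathcal{G}_{3,n}$, your degree-$2$ Veronese lift is the Vandermonde construction in disguise: the paper's $m=4$ proof realizes the (ultrametric-shifted) tree metric by a matrix with rows $1,\,x_i(t),\,x_i(t)^2,\,y_i(t)$, and for $m=3$ one keeps only the first three rows, i.e.\ exactly the Veronese image of the columns $(1,x_i(t))$; your identity $\det(\widetilde v_i\,|\,\widetilde v_j\,|\,\widetilde v_k)=\pm\det(v_i\,|\,v_j)\det(v_j\,|\,v_k)\det(v_i\,|\,v_k)$ is then the Vandermonde factorization. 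Your formulation is more flexible (any lift of $D\in\mathcal{G}_{2,n}$ will do), but note that within this paper's framework $\mathcal{G}_{m,n}$ is defined over the Puiseux field, whose value group is $\QQ$, with a closure taken; so you should either justify invoking the Fundamental Theorem over a valued field with value group $\RR$ (field-independence of tropicalization, which the paper never states), or argue as in the $m=4$ proof: realize rational trees explicitly and pass to the closure using continuity of $\phi^{(3)}$. The reverse inclusion via the tropicalized three-term relations $p_{iab}p_{icd}-p_{iac}p_{ibd}+p_{iad}p_{ibc}$, with the common summand $\tfrac12(X_{ia}+X_{ib}+X_{ic}+X_{id})$ cancelling to leave the four-point condition on $a,b,c,d$, together with your separate treatment of $n=4$ (where $I_{3,4}=0$, so $\mathcal{G}_{3,4}=\RR^4$, and star metrics already cover $\phi^{(3)}(\RR^{4\choose 2})$), is the cited argument.

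Two repairs are needed, both minor. First, at the end of the reverse inclusion you appeal to Theorem \ref{thm tree metric}, but the Pl\"ucker relations only yield the four-point condition for quadruples of \emph{distinct} leaves, whereas Theorem \ref{thm tree metric} as stated requires it for all, not necessarily distinct, quadruples (the degenerate cases encode nonnegativity and the triangle inequalities, which your argument does not produce, and indeed cannot, since leaf edges may carry negative weights here). The correct reference inside this paper is Theorem \ref{thm space trees} together with Remark 2.2: the three-term quadrics form a tropical basis of $I_{2,n}$, so the distinct-quadruple condition already says $X\in\mathcal{G}_{2,n}=\TN$. Second, in the paper's max-plus convention with $a_i=-\val(f_i)$ the corner-locus condition is that the \emph{maximum} of the three sums is attained twice, not the minimum; this is purely a sign convention and does not affect the cancellation argument.
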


In this article, we prove the following partial answer for the case $m=4$.

\begin{Thm} \label{thm main} $\phi^{(4)}(\TN)\subset \mathcal{G}_{4,n}\cap \phi^{(4)}(\RR^{n\choose 2})$. \end{Thm}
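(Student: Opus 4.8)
The inclusion $\phi^{(4)}(\TN)\subseteq\phi^{(4)}(\RR^{n\choose 2})$ is automatic from $\TN\subseteq\RR^{n\choose 2}$, so the whole content is to show $\phi^{(4)}(\TN)\subseteq\mathcal{G}_{4,n}$. The plan is to certify membership in $\mathcal{G}_{4,n}$ directly: for each $n$-tree $T$ with dissimilarity vector $D$, exhibit a point of the Grassmannian $G(4,n)$ over the field $K=\mathbb{C}\{\{t\}\}$ of Puiseux series whose vector of valuations of Plücker coordinates equals $\phi^{(4)}(D)$, up to the sign/shift conventions that identify tree metrics with points of $\mathcal{G}_{2,n}$ in Theorem~\ref{thm space trees}. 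Since the tropical Grassmannian is by definition the tropicalization of $G(4,n)$, this places $\phi^{(4)}(D)$ in $\mathcal{G}_{4,n}$. Equivalently, it suffices to show that $\phi^{(4)}(D)$ is the vector of tropical $4\times 4$ minors of some real $4\times n$ matrix, because a lift of such a matrix to $K$ with generic unit coefficients on the powers of $t$ tropicalizes as required.

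First I would reduce to a local problem. The space $\TN=\mathcal{G}_{2,n}$ is a polyhedral fan whose maximal cones $\sigma_T$ are indexed by trivalent $n$-trees, and $\phi^{(4)}$ is linear on each $\sigma_T$; moreover $\phi^{(4)}$ commutes with the restriction maps deleting a leaf ($\RR^{n\choose 2}\to\RR^{n-1\choose 2}$, resp.\ $\RR^{n\choose 4}\to\RR^{n-1\choose 4}$), and the projection of $\mathcal{G}_{4,n}$ to the coordinates indexed by a subset lands in the corresponding smaller $\mathcal{G}_{4,m}$. Since the quadratic Plücker relations generating the ideal of $G(4,n)$ each involve at most eight indices, this lets one reduce the verification of the tropical Plücker relations to configurations on at most eight leaves, hence to finitely many tree topologies with generic edge lengths. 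Concretely, $\phi^{(4)}(\sigma_T)$ is the cone spanned by the vectors $\phi^{(4)}(\delta_e)=\chi_e$ (for $e$ ranging over the edges of $T$, where $\chi_e(I)=1$ iff the split of $T$ at $e$ separates the $4$-set $I$), so one reduces to non-negative combinations $\sum_e\ell_e\chi_e$ over tree-compatible split systems.

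The combinatorial heart is then a four-point-condition argument. Writing $w=\phi^{(4)}(D)$, for a quadratic Plücker relation with common index set $S$ and extra indices $a,b,c,d$ one checks that the relevant sums $w_{S\cup ab}+w_{S\cup cd}$, $w_{S\cup ac}+w_{S\cup bd}$, $w_{S\cup ad}+w_{S\cup bc}$ differ only by the three pairing sums of the tree metric $D$ restricted to $\{a,b,c,d\}$, up to a common additive term built from the remaining subtree weights; the four-point condition of Theorem~\ref{thm tree metric} then forces the extremum among them to be attained at least twice. This is the direct analogue of the computation that handles $m=3$, and it shows that $\phi^{(4)}(D)$ satisfies every tropical Plücker relation.

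The main obstacle is upgrading this from ``satisfies the tropical Plücker relations'' (membership in the Dressian) to genuine membership in $\mathcal{G}_{4,n}$, since these two objects already differ for $G(4,7)$. Here one must exploit the special structure of the points in question: the subdivision of the hypersimplex $\Delta(4,n)$ induced by $\sum_e\ell_e\chi_e$ is the common refinement of hypersimplex splits coming from a tree-compatible, hence laminar, family, so it is a regular matroid subdivision all of whose cells are realizable matroids; equivalently, one adds the leaves of $T$ one at a time, starting from a generic configuration in $G(4,\cdot)(K)$ and performing controlled ``twisted'' extensions, checking at each step via the four-point condition that the valuations of the $4\times 4$ minors evolve into $\phi^{(4)}(D)$. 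Carrying out this explicit lift — in particular, bookkeeping how the subtrees spanning the various $4$-subsets sit inside $T$ as leaves are added — is the step I expect to be most delicate, and presumably the point at which $m=4$ remains tractable by hand while $m\ge 5$ requires machine computation.
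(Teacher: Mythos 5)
You correctly reduce the problem to showing $\phi^{(4)}(\TN)\subset\mathcal{G}_{4,n}$ and you identify the right general strategy (exhibit a point of $G(4,n)$ over $K=\CC\{\{t\}\}$ whose valuation vector is the $4$-dissimilarity vector), but the proposal never actually produces such a point, and the route you sketch instead has a genuine gap at exactly the crucial step. The four-point-condition computation you describe only verifies the three-term tropical Pl\"ucker relations, i.e.\ membership in the Dressian, and — as you yourself note — the Dressian strictly contains $\mathcal{G}_{4,n}$ already for $n=7$. Your proposed repair is twofold and both halves are unsubstantiated: (i) the reduction ``each quadratic Pl\"ucker relation involves at most eight indices, hence reduce to eight leaves'' does not establish membership in $\mathcal{G}_{4,n}$, because for $m\geq 3$ the quadratic Pl\"ucker relations are not a tropical basis, so membership cannot be checked relation-by-relation (nor does membership of all coordinate projections in smaller tropical Grassmannians imply membership of the full vector); (ii) the claim that the induced regular matroid subdivision has all cells realizable, and that this suffices, is not proved and is not sufficient as stated — one needs a coherent lift realizing the whole vector, and the ``controlled twisted extensions'' adding one leaf at a time are precisely the construction that is missing. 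So the heart of the theorem is deferred to a step you acknowledge you have not carried out.

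For contrast, the paper proves the statement by an explicit construction rather than via the Dressian: after reducing to rational edge lengths (irrational lengths follow by taking closures), one normalizes the metric by $D'(i,j)=2E+D(i,j)-D(i,n)-D(j,n)$ so that it is realized by an equidistant tree, attaches random constants $a(e),b(e)$ to the edges, and forms Puiseux series $x_i(t),y_i(t)$ whose pairwise differences have degrees $D'(i,j)$. The key matrix is
$$M'=\begin{bmatrix}1&\cdots&1\\ x_1&\cdots&x_n\\ x_1^2&\cdots&x_n^2\\ y_1&\cdots&y_n\end{bmatrix},$$
and a case analysis over the three combinatorial types of $4$-leaf subtrees shows that the leading terms of each $4\times 4$ minor do not cancel, so $\deg\det M'(i,j,k,l)=2D'(i,j,k,l)$; rescaling the columns by $t^{2(D(i,n)-E)}$ and substituting $t\mapsto t^{-1/2}$ then gives a matrix whose minors have valuations exactly $-D(i,j,k,l)$. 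This explicit lift is what your proposal would need to supply (or replace by a genuinely complete realizability argument) before it can be considered a proof.
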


To finish the article, we propose three equivalent conjectures for the case $m\geq 5$. The case $m=5$ is solved using a computer algebra system.

\section{Tropical geometry} \label{section tropical}

Consider the tropical semi-ring $(\mathbb{R}\cup\{-\infty\},\oplus,\otimes)$, where the tropical sum is the maximum of two numbers and the tropical product is the usual sum of the numbers. Let $x_1,\ldots,x_k$ be real variables. Tropical monomials $x_1^{i_1}\ldots x_k^{i_k}$ represent linear forms $i_1 x_1+\ldots+i_k x_k$ and tropical polynomials $\oplus_{i\in I} a_i x_1^{i_1}\ldots x_k^{i_k}$ (with $I\subset \mathbb{N}^k$ finite) represent piece-wise linear forms \begin{equation}\max_{i\in I}\{a_i+i_1 x_1+\ldots+i_k x_k\}.\label{troppoly}\end{equation} If $F$ is such a tropical polynomial, we define the tropical hypersurface $\mathcal{H}(F)$ to be its corner locus, i.e. the points $x\in \RR^k$ where the maximum is attained at least twice.

Let $K=\mathbb{C}\{\{t\}\}$ be the field of Puiseux series, i.e. the field of formal sums $c=\sum_{q\in \mathbb{Q}} c_q t^q$ in the variable $t$ such that the set $S_c=\{q|c_q\neq 0\}$ is bounded below and has a finite set of denominators. For each $c\in K^{\ast}$, the set $S_c$ has a minimum, which we call the valuation of $c$ and is denoted by $\val(c)$.

A polynomial $f=\sum_{i\in I} f_i x_i^{i_1}\ldots x_k^{i_k}$ over $K$ gives rise to a tropical polynomial $\trop(f)$, defined by taking $a_i=-\val(f_i)$ in \eqref{troppoly}.

\begin{Thm} \label{Thm def tropical variety}
If $I\subset K[x_1,\dots,x_k]$ is an ideal, the following two subsets of $\RR^k$ coincide:
\begin{enumerate}
\item the intersection of all tropical hypersurfaces $\TT(\trop(f))$ with $f\in I$;
\item the closure in $\RR^k$ of the set $$\{(-\val(x_1),\ldots,-\val(x_k))\,|\,(x_1,\ldots,x_k)\in V(I)\}\subset \QQ^k.$$
\end{enumerate}
\end{Thm}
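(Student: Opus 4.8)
This is the Fundamental Theorem of tropical geometry (Kapranov's theorem for a hypersurface, and its extension to arbitrary ideals), so I would prove the two inclusions separately, writing $x^{a}=x_1^{a_1}\cdots x_k^{a_k}$ for a multi-index $a\in\NN^k$ and $w=(w_1,\dots,w_k)$ for a point of $\RR^k$; note that $I$ always denotes the ideal below, never an index set.

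I would first dispose of the containment $(2)\subseteq(1)$, which is the elementary direction. Take a point $p=(p_1,\dots,p_k)\in V(I)$ with all coordinates in $K^{\ast}$ and set $w_j=-\val(p_j)$. For an arbitrary $f=\sum_{a} f_a x^{a}\in I$ the valuation rules give $\val(f_a p^{a})=\val(f_a)+\sum_j a_j\val(p_j)=\val(f_a)-\langle a,w\rangle$, whose negative $-\val(f_a)+\langle a,w\rangle$ is precisely the affine term appearing in $\trop(f)$ evaluated at $w$. Since $f(p)=0$ and $\val(\sum_\alpha c_\alpha)\geq\min_\alpha\val(c_\alpha)$ with equality when the minimum is attained uniquely, the minimum of $\val(f_a p^{a})$ over $a$ cannot be unique; equivalently the maximum defining $\trop(f)$ at $w$ is attained at least twice, so $w\in\TT(\trop(f))$. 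As this holds for every $f\in I$, the vector $w$ lies in $(1)$. Each $\TT(\trop(f))$ is closed, hence so is the intersection $(1)$, and therefore the closure of the valuation image of $V(I)$ is still contained in $(1)$.

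The reverse inclusion $(1)\subseteq(2)$ is the substantial one, and my plan is to translate membership in $(1)$ into a condition on initial ideals. For $w\in\RR^k$ let $\mathrm{in}_w(f)$ denote the sum, taken over the exponents $a$ realizing $\min_a(\val(f_a)-\langle a,w\rangle)$, of the leading terms $\overline{f_a}\,x^{a}$ with coefficients in the residue field $\CC$; then $w\notin\TT(\trop(f))$ exactly when this minimum is unique, i.e. when $\mathrm{in}_w(f)$ is a single monomial. Consequently $w\in(1)$ if and only if the initial ideal $\mathrm{in}_w(I)=\langle \mathrm{in}_w(f):f\in I\rangle$ contains no monomial. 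Using that the Gröbner (tropical) fan structure carried by $(1)$ is rational and polyhedral, so that rational points are dense in it, and that $(2)$ is closed by construction, I would reduce the whole inclusion to the case $w\in\QQ^{k}$.

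The heart of the argument, and the step I expect to be the main obstacle, is then the lifting: given rational $w$ with $\mathrm{in}_w(I)$ free of monomials, one must produce an honest point $p\in V(I)\subset (K^{\ast})^{k}$ with $-\val(p)=w$. Because $\mathrm{in}_w(I)$ contains no monomial, Hilbert's Nullstellensatz shows that $V(\mathrm{in}_w(I))$ meets the torus $(\CC^{\ast})^{k}$: otherwise some monomial would vanish on $V(\mathrm{in}_w(I))$ and hence lie in its radical, and one checks this forces a monomial back into $\mathrm{in}_w(I)$. A torus point $\overline{p}$ supplies the leading coefficients, and $w$ the leading exponents, of the Puiseux series sought, after which I would extend $\overline{p}$ to a full solution of $V(I)$ coefficient by coefficient through an inductive, Hensel-type procedure. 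The delicate points, where the monomial-freeness of $\mathrm{in}_w(I)$ must be reused at each stage, are guaranteeing that every successive correction can be solved and that the denominators of the exponents stay bounded so the constructed series genuinely lie in $K$ with the prescribed valuations. Taking the closure over all rational $w$ then yields $(1)\subseteq(2)$ and completes the proof.
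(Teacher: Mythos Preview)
Your outline is a faithful sketch of the standard proof of the Fundamental Theorem of tropical geometry: the easy direction via the ultrametric inequality is fully correct, and for the hard direction your reformulation through initial ideals, the Nullstellensatz argument that $\mathrm{in}_w(I)$ monomial-free implies $V(\mathrm{in}_w(I))$ meets the torus, and the Hensel-type lifting of a torus point to a Puiseux-series solution with prescribed valuations is exactly the strategy carried out in the reference the paper cites. The only places where your write-up remains a genuine sketch rather than a proof are the ones you yourself flag: the inductive lifting step and the control on denominators both require real work (flatness of the degeneration to $\mathrm{in}_w(I)$, or an explicit Newton--Puiseux construction), and your reduction to rational $w$ leans on the rationality of the Gr\"obner fan, which itself needs justification.

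The comparison with the paper is short: the paper does not prove this statement at all, but simply invokes \cite[Theorem~2.1]{SS1}. So your approach is not so much ``different'' as ``present''; you are supplying the argument that the paper outsources. If you want to turn your sketch into a self-contained proof, the cleanest route is to follow Speyer--Sturmfels or Einsiedler--Kapranov--Lind for the lifting lemma, or alternatively to quote Draisma's or Payne's version of the theorem, any of which makes precise the step you describe as Hensel-type.
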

\begin{proof} See \cite[Theorem 2.1]{SS1}.
\end{proof}

For an ideal $I\subset K[x_1,\ldots,x_k]$, the set mentioned in Theorem \ref{Thm def tropical variety} is called the {\it tropical variety} $\TT(I)\subset \RR^k$ of the ideal $I$.

We say that $\{f_1,\ldots,f_r\}$ is a tropical basis of $\TT(I)$ if and only if $I=\langle f_1,\ldots,f_r\rangle$ and $$\TT(I)=\TT(\trop(f_1))\cap \cdots \cap \TT(\trop(f_r)).$$

We are particularly interested in {\it tropical Grassmannians} $\mathcal{G}_{m,n}=\mathcal{T}(I_{m,n})$. In this case, the ideal $$I_{m,n}\subset K[x_{i_1\ldots i_m}|1\leq i_1<\ldots<i_m\leq n]$$ is the ideal of the affine Grassmannian $G(m,n)\subset K^{n\choose m}$ parameterizing linear subspaces of dimension $m$ in $K^n$. The ideal $I_{m,n}$ consists of all relations between the $(m\times m)$-minors of an $(m\times n)$-matrix.

\begin{Rmk} {\fontshape{n}\selectfont
In case $m=2$, the {\it Pl\"ucker relations} $$p_{ijkl}:=x_{ij}x_{kl}-x_{ik}x_{jl}+x_{il}x_{jk}$$ (with $i<j<k<l$) generate the ideal $I_{2,n}$. One can show that these polynomials also form a tropical basis of $I_{2,n}$, hence $\mathcal{G}_{2,n}$ is the intersection of the tropical hypersurfaces $\mathcal{H}(\trop(p_{ijkl}))$. Note that $\trop(p_{ijkl})$ is equal to $$(x_{ij}\otimes x_{kl})\oplus (x_{ik}\otimes x_{jl})\oplus (x_{il}\otimes x_{jk})=\max\{x_{ij}+x_{kl},x_{ik}+x_{jl},x_{il}+x_{jk}\},$$ so we get Theorem \ref{thm space trees} using Theorem \ref{thm tree metric}.
} \end{Rmk}

\section{The case $m=4$ : the proof of the main theorem}

\begin{Rmk} \label{rmk phi4} {\fontshape{n}\selectfont
Let $\phi^{(4)}:\RR^{n \choose 2}\to\RR^{n\choose 4}$ be the map sending $X=(X(i,j))_{i<j}$ to $(X(i,j,k,l))_{i<j<k<l}$, where $X(i,j,k,l)$ is the minimum of the three terms \begin{align*} X(i,j)+X(j,k)+X(k,l)+X(i,l), \\X(i,j)+X(j,l)+X(k,l)+X(i,k),\\ X(i,k)+X(j,k)+X(j,l)+X(i,l),\end{align*} divided by two. By Theorem \ref{thm phi}, the map $\phi^{(4)}$ sends the dissimilarity vector $D$ of a tree $T$ to its $4$-dissimilarity vector $(D(i,j,k,l))_{i<j<k<l}$.
} \end{Rmk}

We will now prove the main theorem.

\begin{proof}[Proof of Theorem \ref{thm main}]
Since the inclusion $\phi^{(4)}(\TN)\subset \phi^{(4)}(\RR^{n\choose 2})$ is evident, we only have to prove $\phi^{(4)}(\TN)\subset \mathcal{G}_{4,n}$.

Let $T$ be a tree with $4$-dissimilarity vector $$\mathcal{D}:=(D(i,j,k,l))_{i<j<k<l}=\phi^{(4)}((D(i,j))_{i<j})\in \phi^{(4)}(\TN)\subset\RR^{n\choose 2}.$$ If $M\in K^{4\times n}$, we denote by $M(i,j,k,l)$ the $4\times 4$-minor coming from the columns $i,j,k,l$ of $M$. The tropical Grassmannian is the closure in $\RR^{n\choose 4}$ of the set $$S:=\{(-\val(\det(M(i,j,k,l))))_{i<j<k<l}\,|\,M\in K^{4\times n}\}\subset \QQ^{{n\choose 4}}.$$

Assume first that all edges of $T$ have rational length, hence $\mathcal{D}\in\QQ^{n \choose 4}$. We are going to show that $\mathcal{D}\in S$.

Fix a rational number $E$ with $E\geq D(i,n)$ for all $i$. Define a new metric $D'$ by $$D'(i,j)=2E+D(i,j)-D(i,n)-D(j,n)$$ for all different $i,j\in[n]$, in particular $D'(i,n)=2E$ for $i\neq n$. Note that $D'\in \TN$ and that $D'$ an ultrametric on $\{1,\ldots,n-1\}$, so it can be realized by an equidistant $(n-1)$-tree $T''$ with root $r$. Each edge $e$ of $T''$ has a well-defined height $h(e)$, which is the distance from the top node of $e$ to each leaf below $e$. Pick random rational numbers $a(e)$ and $b(e)$ for every edge $e$ of $T''$. If $i\in\{1,\ldots,n-1\}$ is a leaf of $T''$, define the polynomial $x_i(t)$ resp. $y_i(t)$ as the sum of the monomials $a(e)t^{2h(e)}$ resp. $b(e)t^{2h(e)}$, where $e$ is an edge between $r$ and $i$. It is easy to see that $$D'(i,j)=\deg(x_j(t)-x_i(t))=\deg(y_j(t)-y_i(t))$$ for all $i,j\in\{1,\ldots,n-1\}$.

Denote the distance from $r$ to each leaf by $F$. Since $$2F=\max\{D'(i,j)\,|\,1\leq i< j\leq n-1\}<2E,$$ we have $F<E$. The metric $D'$ on $[n]$ can be realized by a tree $T'$, where $T'$ is the tree obtained from $T''$ by adding the leaf $n$ together with an edge $(r,n)$ of length $2E-F$.
If we define $x_n(t)=y_n(t)=t^{2E}$, we get that $D'(i,j)=\deg(x_j(t)-x_i(t))=\deg(y_j(t)-y_i(t))$ for all $i,j\in[n]$.

Consider the matrix $$M':=\begin{bmatrix} 1&1&1&1&\ldots&1\\ x_1(t)&x_2(t)&x_3(t)&x_4(t)&\ldots&x_n(t)\\ x_1(t)^2&x_2(t)^2&x_3(t)^2&x_4(t)^2&\ldots&x_n(t)^2\\ y_1(t)&y_2(t)&y_3(t)&y_4(t)&\ldots&y_n(t) \end{bmatrix}.$$

We claim that $\deg(\det(M'(i,j,k,l)))=2 D'(i,j,k,l)$ for all $i,j,k,l\in[n]$. After renumbering the leaves, we may assume that $\{i,j,k,l\}=\{1,2,3,4\}$ and that $D'(1,2)\leq D'(1,3)\leq D'(1,4)$. In Figure \ref{types}, all combinatorial types of the subtrees are pictured. Every edge in this picture may consist of several edges of the tree $T'$. Note that types I and II are different, since the top node $v$ sits on a different edge of the subtree. The type III case is special, since $n\in\{i,j,k,l\}$ (before the renumbering).

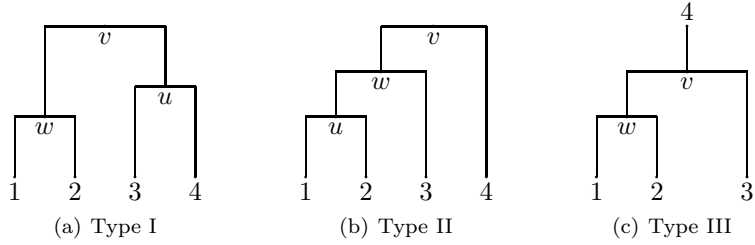
\begin{figure}[h]
\unitlength=0.4mm
\centering
\subfigure[Type I]{
\begin{picture}(60,60)(0,-8)
\put(0,0){\line(0,1){20}}
\put(20,0){\line(0,1){20}}
\put(40,0){\line(0,1){30}}
\put(60,0){\line(0,1){30}}
\put(0,20){\line(1,0){20}}
\put(40,30){\line(1,0){20}}
\put(10,20){\line(0,1){30}}
\put(50,30){\line(0,1){20}}
\put(10,50){\line(1,0){40}}
\put(0,0){\circle*{1}}
\put(0,-2){\makebox(0,0)[t]{$1$}}
\put(20,0){\circle*{1}}
\put(20,-2){\makebox(0,0)[t]{$2$}}
\put(40,0){\circle*{1}}
\put(40,-2){\makebox(0,0)[t]{$3$}}
\put(60,0){\circle*{1}}
\put(60,-2){\makebox(0,0)[t]{$4$}}
\put(10,20){\circle*{1}}
\put(10,18){\makebox(0,0)[t]{$w$}}
\put(30,50){\circle*{1}}
\put(30,48){\makebox(0,0)[t]{$v$}}
\put(50,30){\circle*{1}}
\put(50,28){\makebox(0,0)[t]{$u$}}
\end{picture}
}
\hspace{10mm}
\subfigure[Type II]{
\begin{picture}(60,60)(0,-8)
\put(0,0){\line(0,1){20}}
\put(20,0){\line(0,1){20}}
\put(40,0){\line(0,1){35}}
\put(60,0){\line(0,1){50}}
\put(0,20){\line(1,0){20}}
\put(10,35){\line(1,0){30}}
\put(25,50){\line(1,0){35}}
\put(10,20){\line(0,1){15}}
\put(25,35){\line(0,1){15}}
\put(0,0){\circle*{1}}
\put(0,-2){\makebox(0,0)[t]{$1$}}
\put(20,0){\circle*{1}}
\put(20,-2){\makebox(0,0)[t]{$2$}}
\put(40,0){\circle*{1}}
\put(40,-2){\makebox(0,0)[t]{$3$}}
\put(60,0){\circle*{1}}
\put(60,-2){\makebox(0,0)[t]{$4$}}
\put(25,35){\circle*{1}}
\put(25,33){\makebox(0,0)[t]{$w$}}
\put(42.5,50){\circle*{1}}
\put(42.5,48){\makebox(0,0)[t]{$v$}}
\put(10,20){\circle*{1}}
\put(10,18){\makebox(0,0)[t]{$u$}}
\end{picture}
}
\hspace{10mm}
\subfigure[Type III]{
\begin{picture}(50,60)(0,-8)
\put(0,0){\line(0,1){20}}
\put(20,0){\line(0,1){20}}
\put(50,0){\line(0,1){35}}
\put(10,20){\line(0,1){15}}
\put(30,35){\line(0,1){15}}
\put(0,20){\line(1,0){20}}
\put(10,35){\line(1,0){40}}
\put(0,0){\circle*{1}}
\put(0,-2){\makebox(0,0)[t]{$1$}}
\put(20,0){\circle*{1}}
\put(20,-2){\makebox(0,0)[t]{$2$}}
\put(50,0){\circle*{1}}
\put(50,-2){\makebox(0,0)[t]{$3$}}
\put(30,50){\circle*{1}}
\put(30,52){\makebox(0,0)[b]{$4$}}
\put(10,20){\circle*{1}}
\put(10,18){\makebox(0,0)[t]{$w$}}
\put(30,35){\circle*{1}}
\put(30,33){\makebox(0,0)[t]{$v$}}
\end{picture}
}
\caption{The combinatorial types of $4$-subtrees}
\label{types}
\end{figure}

The determinant of $M'(1,2,3,4)$ is equal to
\begin{eqnarray} \begin{vmatrix} 1&1&1&1\\ x_1&x_2&x_3&x_4 \\ x_1^2&x_2^2&x_3^2&x_4^2\\ y_1&y_2&y_3&y_4\end{vmatrix} &=& \begin{vmatrix}1&1&1&1\\ x_1&x_2&x_3&x_4\\ x_1^2&x_2^2&x_3^2&x_4^2\\ 0&y_2-y_1&y_3-y_1&y_4-y_1\end{vmatrix} \nonumber \\ &=& (y_2-y_1)(x_4-x_1)(x_3-x_1)(x_4-x_3) \nonumber \\ &&-(y_3-y_1)(x_4-x_1)(x_2-x_1)(x_4-x_2) \nonumber \\ &&+(y_4-y_1)(x_3-x_1)(x_2-x_1)(x_3-x_2) \label{determinant}
\end{eqnarray}
The degree of the term $(y_2-y_1)(x_4-x_1)(x_3-x_1)(x_4-x_3)$ in \eqref{determinant} is $$D'(1,2)+D'(1,4)+D'(1,3)+D'(3,4),$$ which equals $2 D'(1,2,3,4)$ for each of the three types.

If $v$ and $w$ are nodes between $r$ and $i$, we will denote the sum of the monomials $a(e)t^{2h(e)}$ for $e$ between $v$ and $w$ by $x_{i,[v,w]}(t)$. Analogously, we define $y_{i,[v,w]}(t)$. \\

We are going to take a look at the type I case. In Figure \ref{typeI}, the arrows stand for edges of $T'$. For example, the edge $e_v$ is adjacent to $v$ and goes into the direction of $w$.

\begin{figure}[h]
\unitlength=.6mm
\begin{center}
\begin{picture}(60,60)
\put(0,0){\line(0,1){20}}
\put(20,0){\line(0,1){20}}
\put(40,0){\line(0,1){30}}
\put(60,0){\line(0,1){30}}
\put(0,20){\line(1,0){20}}
\put(40,30){\line(1,0){20}}
\put(10,20){\line(0,1){30}}
\put(50,30){\line(0,1){20}}
\put(10,50){\line(1,0){40}}
\put(0,0){\circle*{1}}
\put(0,-2){\makebox(0,0)[t]{$1$}}
\put(20,0){\circle*{1}}
\put(20,-2){\makebox(0,0)[t]{$2$}}
\put(40,0){\circle*{1}}
\put(40,-2){\makebox(0,0)[t]{$3$}}
\put(60,0){\circle*{1}}
\put(60,-2){\makebox(0,0)[t]{$4$}}
\put(10,20){\circle*{1}}
\put(10,18){\makebox(0,0)[t]{$w$}}
\put(30,50){\circle*{1}}
\put(30,48){\makebox(0,0)[t]{$v$}}
\put(50,30){\circle*{1}}
\put(50,28){\makebox(0,0)[t]{$u$}}
\put(8,22){\vector(-1,0){9}}
\put(5,24){\makebox(0,0)[b]{$e_{w}$}}
\put(12,22){\vector(1,0){9}}
\put(15,24){\makebox(0,0)[b]{$e'_{w}$}}
\put(48,32){\vector(-1,0){9}}
\put(45,34){\makebox(0,0)[b]{$e_{u}$}}
\put(52,32){\vector(1,0){9}}
\put(55,34){\makebox(0,0)[b]{$e'_{u}$}}
\put(28,52){\vector(-1,0){9}}
\put(25,54){\makebox(0,0)[b]{$e_{v}$}}
\put(32,52){\vector(1,0){9}}
\put(35,54){\makebox(0,0)[b]{$e'_{v}$}}
\end{picture}
\end{center}
\caption{Type I} \label{typeI}
\end{figure}

Denote $x:=x_{3,[v,u]}-x_{1,[v,w]}$, $x_{12}:=x_{2,[w,2]}-x_{1,[w,1]}$, $x_{13}:=x_{3,[u,3]}-x_{1,[w,1]}$, etc. Analogously, we define $y,y_{12},y_{13},\ldots,y_{34}$. The determinant \eqref{determinant} equals
\begin{multline} y_{12}x_{34}(x+x_{13})(x+x_{14}) - x_{12}(y+y_{13})(x+x_{14})(x+x_{24}) \\ + x_{12}(y+y_{14})(x+x_{13})(x+x_{23}). \label{determinantI}\end{multline}
Since $\deg(x)=\deg(y)$ is bigger than $\deg(x_{ij})=\deg(y_{ij})$ for all $i$ and $j$, we have that the degree of the last two terms is equal to $$\deg(x_{12}yx^2)>2 D'(1,2,3,4),$$ but the term $x_{12}yx^2$ vanishes in the determinant. So, the degree of the sum of the last two terms in \eqref{determinantI} is equal to
\begin{align*}
&\deg[x_{12}(x^2(y_{14}-y_{13})+ xy(x_{13}+x_{23}-x_{14}-x_{24}))] \\ &=\deg[x_{12}(y_{34}x^2-2x_{34}xy)] \\ &=2 D'(1,2,3,4).
\end{align*}
We conclude that the determinant of $M'(1,2,3,4)$ has degree $2 D'(1,2,3,4)$. Indeed, the coefficient of $t^{2 D'(1,2,3,4)}$ is equal to
\begin{align*}
&(b(e'_w)-b(e_w))(a(e'_u)-a(e_u))(a(e'_v)-a(e_v))^2 \\ &+(b'(e_u)-b(e_u))(a(e'_w)-a(e_w))(a(e'_v)-a(e_v))^2 \\ &-2(b(e'_v)-b(e_v))(a(e'_v)-a(e_v))(a(e'_w)-a(e_w))(a(e'_u)-a(e_u)) \neq 0.
\end{align*}

For type II and III, the first two terms in \eqref{determinant} have degree $2 D'(1,2,3,4)$ and the last term has a lower degree. Using the notation in Figure \ref{typeIIandIII}, the coefficient of $t^{2 D'(1,2,3,4)}$ in $\det(M'(1,2,3,4))$ is equal to
$$(a(e'_v)-a(e_v))^2[(b(e'_u)-b(e_u))(a(e'_w)-a(e_w))-(b(e'_w)-b(e_w))(a(e'_u)-a(e_u))]\neq 0$$ for type II and $$(b(e'_u)-b(e_u))(a(e'_w)-a(e_w))-(b(e'_w)-b(e_w))(a(e'_u)-a(e_u))\neq 0$$ for type III. \\

\begin{figure}[h]
\unitlength=0.6mm
\centering
\subfigure{
\begin{picture}(60,60)(0,-8)
\put(0,0){\line(0,1){20}}
\put(20,0){\line(0,1){20}}
\put(40,0){\line(0,1){35}}
\put(60,0){\line(0,1){50}}
\put(0,20){\line(1,0){20}}
\put(10,35){\line(1,0){30}}
\put(25,50){\line(1,0){35}}
\put(10,20){\line(0,1){15}}
\put(25,35){\line(0,1){15}}
\put(0,0){\circle*{1}}
\put(0,-2){\makebox(0,0)[t]{$1$}}
\put(20,0){\circle*{1}}
\put(20,-2){\makebox(0,0)[t]{$2$}}
\put(40,0){\circle*{1}}
\put(40,-2){\makebox(0,0)[t]{$3$}}
\put(60,0){\circle*{1}}
\put(60,-2){\makebox(0,0)[t]{$4$}}
\put(25,35){\circle*{1}}
\put(25,33){\makebox(0,0)[t]{$w$}}
\put(42.5,50){\circle*{1}}
\put(42.5,48){\makebox(0,0)[t]{$v$}}
\put(10,20){\circle*{1}}
\put(10,18){\makebox(0,0)[t]{$u$}}
\put(8,22){\vector(-1,0){9}}
\put(5,24){\makebox(0,0)[b]{$e_{u}$}}
\put(12,22){\vector(1,0){9}}
\put(15,24){\makebox(0,0)[b]{$e'_{u}$}}
\put(23,37){\vector(-1,0){9}}
\put(20,39){\makebox(0,0)[b]{$e_{w}$}}
\put(27,37){\vector(1,0){9}}
\put(30,39){\makebox(0,0)[b]{$e'_{w}$}}
\put(40.5,52){\vector(-1,0){9}}
\put(37.5,54){\makebox(0,0)[b]{$e_{v}$}}
\put(44.5,52){\vector(1,0){9}}
\put(47.5,54){\makebox(0,0)[b]{$e'_{v}$}}
\end{picture}
}
\hspace{10mm}
\subfigure{
\begin{picture}(50,60)(0,-8)
\put(0,0){\line(0,1){20}}
\put(20,0){\line(0,1){20}}
\put(50,0){\line(0,1){35}}
\put(10,20){\line(0,1){15}}
\put(30,35){\line(0,1){15}}
\put(0,20){\line(1,0){20}}
\put(10,35){\line(1,0){40}}
\put(0,0){\circle*{1}}
\put(0,-2){\makebox(0,0)[t]{$1$}}
\put(20,0){\circle*{1}}
\put(20,-2){\makebox(0,0)[t]{$2$}}
\put(50,0){\circle*{1}}
\put(50,-2){\makebox(0,0)[t]{$3$}}
\put(30,50){\circle*{1}}
\put(30,52){\makebox(0,0)[b]{$4$}}
\put(10,20){\circle*{1}}
\put(10,18){\makebox(0,0)[t]{$w$}}
\put(30,35){\circle*{1}}
\put(30,33){\makebox(0,0)[t]{$v$}}
\put(8,22){\vector(-1,0){9}}
\put(5,24){\makebox(0,0)[b]{$e_{w}$}}
\put(12,22){\vector(1,0){9}}
\put(15,24){\makebox(0,0)[b]{$e'_{w}$}}
\put(28,37){\vector(-1,0){9}}
\put(25,39){\makebox(0,0)[b]{$e_{v}$}}
\put(32,37){\vector(1,0){9}}
\put(35,39){\makebox(0,0)[b]{$e'_{v}$}}
\end{picture}
}
\caption{Type II and III}
\label{typeIIandIII}
\end{figure}
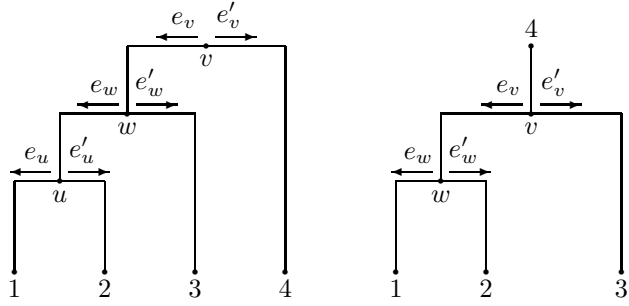

Let $M$ be the matrix obtained from $M'$ by multiplying, for each $i$, the $i$-th column of $M'$ by $(t^{D(i,n)-E})^2$. We have
\begin{eqnarray*} D(i,j) &=&D'(i,j)+(D(i,n)-E)+(D(j,n)-E)\\ &=&\deg\left(t^{D(i,n)-E} \cdot t^{D(j,n)-E} \cdot (x_i(t)-x_j(t))\right).\end{eqnarray*}
Using Remark \ref{rmk phi4}, we get that $2 D(i,j,k,l)=\deg(\det(M(i,j,k,l)))$. If we replace each $t$ in $M$ by $t^{-1/2}$, we have $$D(i,j,k,l)=-\val(\det(M(i,j,k,l))),$$ hence $\mathcal{D}\in S$.\\

Now assume $T$ has irrational edge weights. We can approximate $T$ arbitrarily close by a tree $\widetilde{T}$ with rational edge weights. From the arguments above, it follows that the $4$-dissimilarity vector $\widetilde{\mathcal{D}}$ of $\widetilde{T}$ belongs to $S$, hence $\mathcal{D}\in \mathcal{G}_{4,n}$.
\end{proof}

\section{What about the case $m\geq 5$?}

The proof of Theorem \ref{thm main} does not give an obstruction for the following to be true for $m\geq 5$.

\begin{Conj}\label{conj1}
$\phi^{(m)}(\TN)\subset \mathcal{G}_{m,n}\cap \phi^{(m)}(\RR^{n\choose 2})$
\end{Conj}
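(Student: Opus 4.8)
The plan is to mimic, as far as possible, the proof of Theorem \ref{thm main}, since the structure there is quite robust. Reduce at once to the case in which $T$ has rational edge lengths; the irrational case then follows by the same approximation argument (the tropical Grassmannian is closed, and $\phi^{(m)}$ is continuous). Next, as in the proof of Theorem \ref{thm main}, replace the tree metric $D$ of $T$ by the shifted metric $D'(i,j) = 2E + D(i,j) - D(i,n) - D(j,n)$, so that $D'$ restricts to an ultrametric on $\{1,\dots,n-1\}$ realized by an equidistant tree $T''$ with root $r$, and $D'(i,n) = 2E$ for $i \ne n$. Choose $E$ large enough that the extra leaf $n$ is joined to $r$. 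Then, as in the excerpt, encode the heights of the edges of $T''$ by monomials $t^{2h(e)}$: attach to each leaf $i < n$ a polynomial $x_i(t) = \sum_e a(e)\, t^{2h(e)}$ (sum over edges $e$ between $r$ and $i$), with generic rational coefficients $a(e)$, and set $x_n(t) = t^{2E}$. One needs $m$ such independent families of polynomials, say $x_i^{(1)}(t),\dots,x_i^{(m)}(t)$ built from independent generic coefficient systems (for $m=4$ the excerpt uses two families $x_i,y_i$ together with the rows $1, x_i, x_i^2$, i.e.\ a Vandermonde-type device). The natural candidate is the $m \times n$ matrix $M'$ whose $i$-th column is $(1,\, x_i(t),\, x_i(t)^2,\, \dots,\, x_i(t)^{m-2},\, z_i(t))^{\mathsf T}$ with $z_i$ a fresh generic family — or some variant that makes the leading-degree computation come out right. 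The final rescaling of column $i$ by $(t^{D(i,n)-E})^2$ and the substitution $t \mapsto t^{-1/2}$ convert degrees of minors into minus valuations, exactly as in the $m=4$ case, and give $\mathcal D \in S \subset \mathcal G_{m,n}$.

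The crux is the claim that, for generic choices of the $a(e)$'s, the $m \times m$ minor $M'(i_1,\dots,i_m)$ has degree exactly $2 D'(i_1,\dots,i_m)$. After renumbering we may assume $\{i_1,\dots,i_m\} = \{1,\dots,m\}$ and that the smallest subtree $\Sigma$ of $T'$ spanning these leaves has a fixed combinatorial type. The quantity $2 D'(1,\dots,m)$ is (twice) the total edge length of $\Sigma$, which by the coordinatization equals the sum of $\deg$ of a suitable product of differences $x_{i_a}(t) - x_{i_b}(t)$ coming from a spanning structure of $\Sigma$. Expanding the determinant, one must show (i) that there is a term of this degree, and (ii) that its leading coefficient, a polynomial in the $a(e)$'s, is not identically zero — which forces it to be nonzero for generic $a(e)$; and (iii) that no cancellation of higher-degree terms fails, i.e.\ the genuinely top-degree monomials in the naive expansion cancel identically (as $x_{12} y x^2$ did in \eqref{determinantI}), leaving $2D'(1,\dots,m)$ as the true degree. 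Step (iii) is exactly what makes the $m=4$ argument delicate — the Type I case needed a cancellation plus a secondary degree computation — and for general $m$ it will require a clean combinatorial bookkeeping of degrees along $\Sigma$, organized by where the "internal" columns attach.

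Concretely, I would set up the following lemma and prove it by induction on $m$ (or on the number of internal vertices of $\Sigma$): for any $m$-subtree $\Sigma$ of combinatorial type as above, the degree of $\det M'(1,\dots,m)$ equals the total length of $\Sigma$ (times two), and its leading coefficient is a nonzero polynomial in the $a(e)$. The inductive step would contract a pendant cherry of $\Sigma$, relating the $m \times m$ minor to an $(m-1) \times (m-1)$ minor via a row/column operation analogous to the reduction $y_i \mapsto y_i - y_1$ and the factorization in \eqref{determinant}. The main obstacle I anticipate is controlling the non-vanishing of the leading coefficient in full generality: the identity $2 D'(1,\dots,m) = \deg(\text{some monomial})$ is easy, but verifying that the corresponding coefficient polynomial is not annihilated by the Plücker-type syzygies among the minors — equivalently, exhibiting one explicit choice of $a(e)$ for which it is nonzero for every combinatorial type of $\Sigma$ — is the step that resists a one-line argument, and is presumably why the author only conjectures this for $m \ge 5$ and resorts to a computer check for $m = 5$. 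A possible route around it is to choose the $x_i(t)$ so that the matrix $M'$ is literally a (generalized) Vandermonde matrix in the $x_i(t)$, so that $\det M'(1,\dots,m) = \prod_{a<b}(x_{i_b}(t) - x_{i_a}(t))$ up to a nonzero scalar, making both the degree and the non-vanishing transparent; the difficulty then shifts to checking that such a Vandermonde choice still produces a matrix over $K$ whose minors have the prescribed valuations, i.e.\ that $\deg\bigl(\prod_{a<b}(x_{i_b}-x_{i_a})\bigr) = \sum_{a<b} D'(i_a,i_b)$ agrees with $2D'(i_1,\dots,i_m)$ — which it does \emph{not} in general, so this shortcut needs the extra non-Vandermonde rows and we are back to the delicate cancellation analysis.
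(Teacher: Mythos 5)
This statement is a \emph{conjecture} in the paper: the author proves it only for $m=4$ (Theorem \ref{thm main}) and, after reducing it to Conjecture \ref{conj3}, verifies $m=5$ by a computer check of the three combinatorial types of equidistant $5$-trees. Your proposal reproduces exactly the paper's own reduction strategy (rational approximation, the shift $D'(i,j)=2E+D(i,j)-D(i,n)-D(j,n)$ to an ultrametric realized by an equidistant tree, a matrix of Puiseux series whose maximal minors should have degree $2D'(i_1,\ldots,i_m)$, then rescaling columns and substituting $t\mapsto t^{-1/2}$), but it does not close the gap that makes this a conjecture: the crux claim --- that $\deg\det M'(i_1,\ldots,i_m)$ equals twice the subtree length with a not-identically-zero leading coefficient for every combinatorial type --- is precisely the content of Conjecture \ref{conj3}, and your ``contract a pendant cherry'' induction is only announced, not carried out. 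You acknowledge this yourself; as written, the proposal is a plan whose missing step is the entire open problem, so it does not constitute a proof of Conjecture \ref{conj1} (nor could the paper supply one, since it leaves the general case open).

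There is also a concrete defect in your candidate matrix. You propose columns $(1,\,x_i,\,x_i^2,\,\ldots,\,x_i^{m-2},\,z_i)^{\mathsf T}$, i.e.\ a Vandermonde block in a single family plus one fresh row. For $m=4$ this coincides with the paper's matrix $(1,x_i,x_i^2,y_i)$, but for $m\geq 5$ it has the wrong homogeneity: by Theorem \ref{thm phi}, $2D(i_1,\ldots,i_m)$ is a sum of $m$ pairwise distances, each realized as the degree of a difference of two column entries, so the expansion of the $m\times m$ minor must be homogeneous of total degree $m$ in the entries $x_i^{(j)}$. The paper's matrix in Conjectures \ref{conj2}--\ref{conj3} (rows $1,\,x^{(1)},\,(x^{(1)})^2,\,x^{(2)},\ldots,x^{(m-2)}$, i.e.\ $m-2$ independent generic families with only the first one squared) has total degree $0+1+2+(m-3)=m$; yours has total degree $\binom{m-1}{2}+1>m$ for $m\geq 5$, so the minors' degrees would generically overshoot $2D'(i_1,\ldots,i_m)$ and the key identity fails already at the level of the candidate construction. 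This is exactly the constraint discussed in the paper's remark following Conjecture \ref{conj2} (the row of ones is needed to cancel the distance to the root, and homogeneity of degree $m$ then forces a single quadratic row rather than higher powers). Your closing observation that a pure Vandermonde choice gives $\sum_{a<b}D'(i_a,i_b)\neq 2D'(i_1,\ldots,i_m)$ is correct, but the fix is the paper's mixed matrix, not additional powers of one family; and even with the correct matrix the degree/non-cancellation analysis for all combinatorial types of equidistant $m$-trees remains unproven for $m\geq 6$.
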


Note that using the same arguments as in the proof of Theorem \ref{thm main}, it suffices to show the following.

\begin{Conj}\label{conj2} Let $m\leq n$ be integers and let $T'$ be a weighted equidistant $(n-1)$-tree with root $r$ such that all edges of $T'$ have rational length. Denote the distance between $r$ and each leaf of $T'$ by $d'$.

Let $T$ be the tree attained from $T'$ by adding an edge $(r,n)$ of length $d''\in\QQ$ with $d''>d'$.

For each edge $e$ of $T'$, pick random numbers $a_1(e),\ldots,a_{m-2}(e)\in\mathbb{C}$ and denote its height in $T'$ by $h(e)$. Let $x_i^{(j)}(t)\in K$ (with $i\in\{1,\ldots,n-1\}$ and $j\in\{1,\ldots,m-2\}$) be the sum of the monomials $a_j(e)t^{h(e)}$, where $e$ runs over all edges between $r$ and $i$, and define $$x_n^{(1)}(t)=\ldots=x_n^{(m-2)}(t)=t^{(d'+d'')/2}\in K.$$ Consider the matrix $$
M=\begin{bmatrix} 1&1&\ldots&1\\ x_1^{(1)}&x_2^{(1)}&\ldots&x_n^{(1)}\\ (x_1^{(1)})^2&(x_2^{(1)})^2&\ldots&(x_n^{(1)})^2\\ x_1^{(2)}&x_2^{(2)}&\ldots&x_n^{(2)} \\ \vdots&\vdots&\vdots&\vdots \\ x_1^{(m-2)}&x_2^{(m-2)}&\ldots&x_n^{(m-2)}\end{bmatrix} \in K^{m\times n}.
$$

Let $i_1,\ldots,i_m$ be pairwise disjoint elements in $\{1,\ldots,n\}$. Then we have that $D(i_1,\ldots,i_m)=\deg(\det(M(i_1,\ldots,i_m)))$.
\end{Conj}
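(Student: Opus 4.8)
The plan is to carry out, in the generality of $m$ rows, the explicit determinant computation used in the proof of Theorem~\ref{thm main}. Write $\mathcal{D}:=D(i_1,\ldots,i_m)$ for the weight of the minimal subtree $T_0\subseteq T$ spanning $i_1,\ldots,i_m$; after renumbering we may assume $\{i_1,\ldots,i_m\}=\{1,\ldots,m\}$, and we distinguish the two cases according to whether $n\in\{1,\ldots,m\}$ (so $T_0$ uses the pendant edge $(r,n)$) or $n\notin\{1,\ldots,m\}$ (so $T_0\subseteq T'$). First I would record the basic fact used implicitly in the proof of Theorem~\ref{thm main}: for $i,k\neq n$ the difference $x_i^{(j)}(t)-x_k^{(j)}(t)$ has degree equal to the height in $T'$ of the meet $\mathrm{meet}(i,k)$ of the leaves $i$ and $k$ (the same for every $j$), with leading coefficient a nonzero $\CC$-linear combination of the parameters $a_j(e)$ attached to the two edges issuing downward from that meet node; the column $x_n^{(j)}=t^{(d'+d'')/2}$ behaves as a leaf whose ``meet'' with every other leaf sits at height $(d'+d'')/2$. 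All leaves of $T'$ moreover have $\deg x_i^{(j)}=d'$, with leading coefficient $a_j(e)$ for the (equidistant) edge $e$ joining $r$ to the child subtree containing $i$.

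The key structural step is a Laplace expansion of $\det M(1,\ldots,m)$ along the three rows $1,\,x^{(1)},\,(x^{(1)})^2$. This writes the determinant as a signed sum over all $3$-element subsets $A\subseteq\{1,\ldots,m\}$ of terms
\[
\pm\left(\prod_{\{b,b'\}\subseteq A}\bigl(x_b^{(1)}-x_{b'}^{(1)}\bigr)\right)\cdot\det\left(x_a^{(j)}\right)_{\substack{2\le j\le m-2\\ a\in A^{c}}},
\]
a Vandermonde in the columns of $A$ times an $(m-3)\times(m-3)$ minor of the linear rows in the complementary columns $A^{c}$; for $m=4$ this is exactly \eqref{determinant}. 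The Vandermonde factor contributes $\sum_{\{b,b'\}\subseteq A}h(\mathrm{meet}(b,b'))$ to the degree, which by Theorem~\ref{thm phi} with $m=3$ equals the $3$-subtree weight $D(A)$. The second factor has degree computed by the same column-reduction procedure as in Theorem~\ref{thm main}: successively subtract one column from the others and iterate, noticing that leading coefficients collapse whenever several of the involved leaves share a child subtree of a common ancestor. I would then carry out, case by case over the finitely many combinatorial types of $T_0$ (generalizing Figure~\ref{types}), the elementary metric bookkeeping showing that the maximum over $A$ of these degrees equals $\mathcal{D}$, and that it is attained exactly on a well-understood family of subsets $A$ — those for which the triple $A$ together with the assignment of $A^{c}$ to the linear rows distributes the two ``sides'' of every internal vertex of $T_0$ correctly. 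This is precisely the phenomenon encoded by the types I, II, III in the $m=4$ case.

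It remains to prove that the coefficient of $t^{\mathcal{D}}$ in $\det M(1,\ldots,m)$ does not vanish. Several of the degree-maximal terms share the same leading monomial and cancel for structural reasons: because the block $1,x^{(1)},(x^{(1)})^2$ is a Vandermonde (it vanishes on repeated columns and its elementary symmetric functions interlock), because the identity generating the third row forces the leading part of $(x_a^{(1)})^2$ to be the square of that of $x_a^{(1)}$, and because entries $x_a^{(j)}$ have equal leading coefficients whenever the corresponding leaves lie in a common child subtree of a meet node. These are exactly the cancellations that in the $m=4$, type~I argument replace the would-be leading term $x_{12}\,y\,x^2$ by the genuine one $x_{12}\,(y_{34}x^2-2x_{34}xy)$. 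I would organize the nonvanishing statement by induction on the size of $T_0$: peel off a cherry of $T_0$ (two leaves with a lowest meet), perform the corresponding column and row operations, and reduce to the same statement for the tree obtained by contracting the cherry, while tracking a residual polynomial in the $a_j(e)$ that must be shown not to be identically zero. Since a random choice of the $a_j(e)$ avoids the zero locus of any fixed nonzero polynomial, establishing that this residual polynomial is not the zero polynomial finishes the proof.

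The main obstacle is exactly this last step: showing, uniformly in $m$, that after all the forced cancellations the surviving ``initial form'' of $\det M(1,\ldots,m)$ (with respect to the $t$-adic valuation) is a nonzero polynomial in the parameters $a_j(e)$. For $m=4$ this is the short explicit check in the proof of Theorem~\ref{thm main}; for $m=5$ it is a finite but no longer hand-sized computation over all combinatorial types of a $5$-leaf subtree, hence the use of a computer algebra system; for $m\ge 6$ both the number of combinatorial types and the depth of the higher-order cancellation pattern grow, and I do not see how to reduce the nonvanishing to a single type-independent identity. This is why the statement is formulated as a conjecture for general $m$.
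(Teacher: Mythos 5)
The statement you are proving is left as a conjecture in the paper precisely because the step you defer is the whole difficulty, and your proposal does not close it. What you outline is essentially the paper's own strategy: reduce to the minimal subtree spanned by $i_1,\ldots,i_m$ (the paper does this by showing Conjecture \ref{conj2} equivalent to Conjecture \ref{conj3}, about a single equidistant $m$-tree), then argue type by type over the finitely many combinatorial shapes, computing the candidate leading term and checking that its coefficient is a nonzero polynomial in the parameters $a_j(e)$. For $m=4$ this is the explicit computation in the proof of Theorem \ref{thm main} (your Laplace expansion along the rows $1,x^{(1)},(x^{(1)})^2$ specializes to \eqref{determinant}); for $m=5$ the paper does the finite check by computer algebra. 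Your final paragraph concedes that for $m\ge 6$ you have no type-independent argument that the surviving initial form is a nonzero polynomial in the $a_j(e)$, so the proposal is a programme, not a proof, and it is the same programme the paper already carries out as far as it can.

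Beyond the admitted obstacle, the intermediate bookkeeping is also thinner than it looks. The degree of the complementary $(m-3)\times(m-3)$ minor $\det(x_a^{(j)})$ is not given by a naive entrywise count: in the equidistant tree all $x_a^{(j)}$ with $a\neq n$ have the same degree $d'$, and their leading coefficients agree whenever the leaves lie in the same child subtree of the root, so this minor undergoes exactly the same cascade of cancellations as the full determinant; your ``same column-reduction procedure'' buries a recursion that is never set up. Likewise, the claim that the maximum over the $3$-subsets $A$ of (Vandermonde degree) $+$ (minor degree) equals $\mathcal{D}$, and that the degree-maximal Laplace terms do not cancel one another beyond the structural cancellations you list, is asserted ``case by case over the finitely many combinatorial types'' --- but the number of types grows with $m$ (the paper records $t(6)=6$, $t(7)=11$, $t(8)=23,\ldots$), so this is a per-$m$ finite verification, not an argument for all $m$. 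To turn your sketch into a proof you would need precisely the uniform nonvanishing statement (for instance via the cherry-contraction induction you gesture at, with an explicit identity expressing the initial form of $\det M$ for $T_0$ in terms of that of the contracted tree), and that is the open content of the conjecture.
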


\begin{Rmk} {\fontshape{n}\selectfont
The matrix $M$ arising in Conjecture \ref{conj1} has a sort of asymmetry. However, if one would construct polynomials $x_i^{(j)}$ as in the conjecture with $j\in \{1,\ldots,m\}$ for each leaf $i\in\{1,\ldots,n\}$, the statement fails for $$N=\begin{bmatrix} x_1^{(1)}&x_2^{(1)}&\ldots&x_n^{(1)}\\ x_1^{(2)}&x_2^{(2)}&\ldots&x_n^{(2)} \\ \vdots&\vdots&\vdots&\vdots \\ x_1^{(m)}&x_2^{(m)}&\ldots&x_n^{(m)}\end{bmatrix} \in K^{m\times n},$$ even for $m=3$. Indeed, if the minimal subtree $\widetilde{T}$ of the equidistant tree $T'$ containing the three leaves $i_1,i_2,i_3$ does not contain the root $r$, the degree of the determinant of $N(i_1,i_2,i_3)$ is not equal to the length of $\widetilde{T}$. Instead, it is equal to the length of the subtree of $T'$ containing the leaves $i_1,i_2,i_3$ and the root $r$. The same happens for $m=4$. So it seems that the row consisting of ones in the matrix $M$ is necessary to cancel the distance between the top node of $\widetilde{T}$ and the root $r$. On the other hand, the determinant of a maximal minor has to be homogeneous in the variables $x_i^{(j)}$ of degree $m$ (see Theorem \ref{thm phi}), so once we put a row with ones in $M$, there should be a row consisting of quadric forms in the variables $x_i^{(j)}$, i.e. the third row of $M$. 
} \end{Rmk}

We can simplify Conjecture \ref{conj2}. Firstly, we can see that the tree $T$ can be considered as an equidistant $n$-tree, if we pick the top node to be the node on the edge $(r,n)$ at distance $(d'+d'')/2$ of $n$. For example, in the proof of Theorem \ref{thm main}, the types II and III are in fact equivalent. Secondly, assume $I=\{i_1,\ldots,i_m\}$ is an $m$-subset of $\{1,\ldots,n\}$ and let $T_I$ be the minimal subtree of $T$ containing the leafs in $I$. The edges between the top node $r_I$ of $T_I$ and the root $r$ of $T$ do not give a contribution in the determinant of $M(I)=M(i_1,\ldots,i_m)$. Also, the edges of $T_I$ with $2$-valent top node different from $r_I$ can be canceled out in the computation of $\deg(\det(M(I)))$. So we see that Conjecture \ref{conj2} is equivalent to the following.

\begin{Conj} \label{conj3}
Let $T$ be an equidistant $m$-tree with root $r$ such that all edges of $T$ have rational length.

For each edge $e$ of $T$, pick random numbers $a_1(e),\ldots,a_{m-2}(e)\in\mathbb{C}$ and denote its height in $T$ by $h(e)$. Let $x_i^{(j)}(t)\in K$ (with $i\in\{1,\ldots,m\}$ and $j\in\{1,\ldots,m-2\}$) be the sum of the monomials $a_j(e)t^{h(e)}$, where $e$ runs over all edges between $r$ and $i$.
Then the degree of the determinant of $$M=\begin{bmatrix} 1&1&\ldots&1\\ x_1^{(1)}&x_2^{(1)}&\ldots&x_m^{(1)}\\ (x_1^{(1)})^2&(x_2^{(1)})^2&\ldots&(x_m^{(1)})^2\\ x_1^{(2)}&x_2^{(2)}&\ldots&x_m^{(2)} \\ \vdots&\vdots&\vdots&\vdots \\ x_1^{(m-2)}&x_2^{(m-2)}&\ldots&x_m^{(m-2)}\end{bmatrix}$$
is equal to the length $D$ of $T$.
\end{Conj}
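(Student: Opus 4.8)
The plan is to recast the statement, via elementary row and column operations, as a cleaner determinantal identity, and then to prove the degree equality $\deg(\det M)=D$ by splitting it into an upper and a lower bound, each handled by induction on the combinatorial structure of $T$.

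\emph{Step 1 (column reduction).} Put $u_i:=x_i^{(1)}-x_1^{(1)}$ and $v_i^{(j)}:=x_i^{(j)}-x_1^{(j)}$ for $2\le i\le m$, $2\le j\le m-2$. Subtracting the first column of $M$ from all others turns the first row into $(1,0,\dots,0)$; expanding along that row, and subtracting $2x_1^{(1)}$ times the $x^{(1)}$-row from the $(x^{(1)})^2$-row, yields
$$\det M=\pm\det\begin{bmatrix}u_2&u_3&\cdots&u_m\\ u_2^2&u_3^2&\cdots&u_m^2\\ v_2^{(2)}&v_3^{(2)}&\cdots&v_m^{(2)}\\ \vdots&\vdots&&\vdots\\ v_2^{(m-2)}&v_3^{(m-2)}&\cdots&v_m^{(m-2)}\end{bmatrix}.$$
Here $\deg u_i=\deg v_i^{(j)}=\mathrm{ht}(\mathrm{meet}(1,i))=\tfrac12 D(1,i)$, and the leading coefficient of $u_i$ (resp. $v_i^{(j)}$) depends only on the two edges of $T$ incident to $\mathrm{meet}(1,i)$; hence columns whose leaves meet leaf $1$ at the same node have matching leading behaviour, and it is precisely these coincidences that cut $\deg(\det M)$ down from the naive Leibniz bound $\mathrm{ht}(r)+\sum_{i\ge2}\mathrm{ht}(\mathrm{meet}(1,i))$ — which, as the case $m=3$ already shows, typically exceeds $D$ — to the value $D$.

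\emph{Step 2 (reduction to two claims about the random weights).} View $\det M$ as a polynomial in the coefficients $a_j(e)$ with values in $\CC[t]$. It is enough to show (I) no power $t^N$ with $N>D$ occurs in $\det M$, and (II) the coefficient of $t^{D}$ is a nonzero polynomial in the $a_j(e)$; then $\deg(\det M)=D$ for generic — hence for our — weights. For (I) the idea is to iterate the reduction of Step 1 down the tree, distinguishing one leaf in each maximal proper subtree and subtracting its column repeatedly, so that every block of leaves with a high common meet is replaced by one untouched column plus columns of strictly smaller $t$-degree; after these reductions the Leibniz bound drops to $D$, which one identifies with $\tfrac12\min_{\sigma\in\CM}\sum_{k=1}^m D(k,\sigma(k))$ via Theorem \ref{thm phi}. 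For (II) one proceeds by induction on the number of internal nodes of $T$: picking a cherry $\{m-1,m\}$ with parent $w$ at height $a$, the operation $C_m\leftarrow C_m-C_{m-1}$ extracts from every row other than the $(x^{(1)})^2$-row a common factor $x_m^{(i)}-x_{m-1}^{(i)}=(a_i(w,m)-a_i(w,m-1))t^a$ of $t$-degree $a$, after which one tries to express the $t^{D}$-coefficient of $\det M$ in terms of the corresponding coefficient for the equidistant tree with the cherry deleted.

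\emph{The main obstacle.} The entire difficulty — and the reason the statement is established only through $m=5$, and there only by machine — lies in (II). The quadratic row $(x_i^{(1)})^2$ fails to transform compatibly with the contraction of a cherry: in that row $C_m-C_{m-1}$ equals $(x_m^{(1)}-x_{m-1}^{(1)})(x_m^{(1)}+x_{m-1}^{(1)})$, so it drags along the extra factor $x_m^{(1)}+x_{m-1}^{(1)}$ of degree $\tfrac12 D(1,m)$ instead of only the cherry factor of degree $a$; as a result the $t^{D}$-coefficient does not factor as a product of the coefficients attached to smaller trees, and the naive induction does not close. For $m=3$ the matrix is a plain Vandermonde, so (II) is immediate; for $m=4$ the $t^{D}$-coefficient can still be written out by hand for each combinatorial type of $4$-subtree — exactly the computation in the proof of Theorem \ref{thm main} — and recognised as a nonzero polynomial; for $m=5$ this has been confirmed by a computer algebra system. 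What is still missing in general is a manifestly nonvanishing combinatorial formula for the $t^{D}$-coefficient, say as a signed sum over suitable spanning structures of $T$, that would render the case analysis unnecessary.
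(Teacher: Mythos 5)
What you have written is a programme, not a proof, and by your own admission it does not close: both halves of your Step 2 are left open. For (I) you only gesture at an iterated column reduction ``down the tree'' that is supposed to bring the Leibniz bound down to $D$; this is exactly the delicate point where the row of ones and the quadratic row $(x_i^{(1)})^2$ interact, and nothing in the sketch shows that the iteration can be organised so that no term of degree larger than $D$ survives. For (II) you state explicitly that the cherry-contraction induction fails because the quadratic row contributes the factor $x_m^{(1)}+x_{m-1}^{(1)}$ rather than only the cherry factor, and that you have no general, manifestly nonvanishing formula for the $t^{D}$-coefficient; that is precisely the missing idea, not a removable technicality. So, judged as a proof of the statement, the proposal has a genuine gap at both (I) and (II).

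That said, the statement you were asked to prove is Conjecture~\ref{conj3} of the paper, and the paper itself offers no proof of it. It is verified there only for small $m$: $m=3$ is a Vandermonde determinant, $m=4$ follows from the explicit computation of the leading coefficient for the three combinatorial types in the proof of Theorem~\ref{thm main}, and $m=5$ is checked with a computer algebra system by computing the determinant for each of the three combinatorial types of equidistant $5$-trees and verifying that the leading coefficient (a homogeneous polynomial of degree $5$ in the $a_j(e)$, with $272$ or $144$ terms) is not identically zero. Your decomposition into an upper bound plus generic nonvanishing of the $t^{D}$-coefficient, and your diagnosis that the quadratic row obstructs a naive induction, are sound and match the paper's own remark that the type-by-type computation does not scale and that one can only ``hope to find a proof by induction on $m$''. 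In short: your assessment of the state of the problem is accurate and your reduction is a reasonable starting point, but no proof is achieved here --- nor does the paper provide one.
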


We give an example to illustrate Conjecture \ref{conj3} for $m=5$.

\begin{Ex} \label{ex m5} {\fontshape{n}\selectfont
Consider the equidistant $5$-tree $T$ of Figure \ref{Fig Example}. In the boxes, the distances of the edges are mentioned. Note that $D=37$.
\begin{figure}[h]
\unitlength=.6mm
\begin{center}
\begin{picture}(73,50)
\put(0,0){\line(0,1){20}}
\put(20,0){\line(0,1){20}}
\put(10,20){\line(0,1){15}}
\put(23,35){\line(0,1){15}}
\put(53,0){\line(0,1){30}}
\put(73,0){\line(0,1){30}}
\put(36,0){\line(0,1){35}}
\put(63,30){\line(0,1){20}}
\put(0,20){\line(1,0){20}}
\put(10,35){\line(1,0){26}}
\put(23,50){\line(1,0){40}}
\put(53,30){\line(1,0){20}}
\put(0,0){\circle*{1}}
\put(0,-2){\makebox(0,0)[t]{$1$}}
\put(20,0){\circle*{1}}
\put(20,-2){\makebox(0,0)[t]{$2$}}
\put(36,0){\circle*{1}}
\put(36,-2){\makebox(0,0)[t]{$3$}}
\put(53,0){\circle*{1}}
\put(53,-2){\makebox(0,0)[t]{$4$}}
\put(73,0){\circle*{1}}
\put(73,-2){\makebox(0,0)[t]{$5$}}
\put(10,20){\circle*{1}}
\put(10,18){\makebox(0,0)[t]{$w$}}
\put(23,35){\circle*{1}}
\put(23,33){\makebox(0,0)[t]{$v$}}
\put(43,50){\circle*{1}}
\put(43,48){\makebox(0,0)[t]{$r$}}
\put(63,30){\circle*{1}}
\put(63,28){\makebox(0,0)[t]{$u$}}
\put(-3.5,13){\makebox(0,0)[t]{$\small{\fbox{\hspace{-0.7mm}4\hspace{-0.6mm}}}$}}
\put(16.5,13){\makebox(0,0)[t]{$\small{\fbox{\hspace{-0.7mm}4\hspace{-0.6mm}}}$}}
\put(6.5,31){\makebox(0,0)[t]{$\small{\fbox{\hspace{-0.7mm}3\hspace{-0.6mm}}}$}}
\put(19.5,46){\makebox(0,0)[t]{$\small{\fbox{\hspace{-0.7mm}3\hspace{-0.6mm}}}$}}
\put(66.5,43){\makebox(0,0)[t]{$\small{\fbox{\hspace{-0.7mm}4\hspace{-0.6mm}}}$}}
\put(39.5,20){\makebox(0,0)[t]{$\small{\fbox{\hspace{-0.7mm}7\hspace{-0.6mm}}}$}}
\put(56.5,18){\makebox(0,0)[t]{$\small{\fbox{\hspace{-0.7mm}6\hspace{-0.6mm}}}$}}
\put(76.5,18){\makebox(0,0)[t]{$\small{\fbox{\hspace{-0.7mm}6\hspace{-0.6mm}}}$}}
\end{picture}
\end{center}
\caption{Equidistant $5$-tree $T$} \label{Fig Example}
\end{figure}
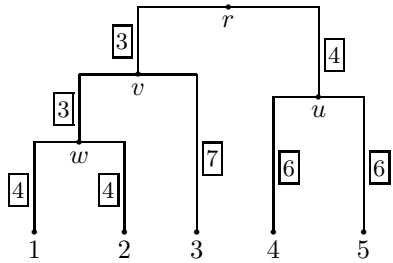

Following the notations of Conjecture \ref{conj3}, we have
\begin{eqnarray*}
x_1^{(j)}(t)&=&a_j(r,v)\,t^{10}+a_j(v,w)\,t^7+a_j(w,1)\,t^4 , \\
x_2^{(j)}(t)&=&a_j(r,v)\,t^{10}+a_j(v,w)\,t^7+a_j(w,2)\,t^4 ,\\
x_3^{(j)}(t)&=&a_j(r,v)\,t^{10}+a_j(v,3)\,t^7 ,\\
x_4^{(j)}(t)&=&a_j(r,u)\,t^{10}+a_j(u,4)\,t^6 ,\\
x_5^{(j)}(t)&=&a_j(r,u)\,t^{10}+a_j(u,5)\,t^6 .
\end{eqnarray*}
Using a computer algebra system, one can see that the determinant of $M$
%$$M=\begin{bmatrix} 1&1&1&1&1\\ x_1^{(1)}&x_2^{(1)}&x_3^{(1)}&x_4^{(1)}&x_5^{(1)}\\ %(x_1^{(1)})^2&(x_2^{(1)})^2&(x_3^{(1)})^2&(x_4^{(1)})^2&(x_5^{(1)})^2\\ x_1^{(2)}&x_2^{(2)}&x_3^{(2)}&x_4^{(2)}&x_5^{(2)} \\  %x_1^{(3)}&x_2^{(3)}&x_3^{(3)}&x_4^{(3)}&x_5^{(3)}\end{bmatrix}$$
is a polynomial of degree $37$ in the variable $t$. Each of its coefficients is homogeneous of degree $5$ in the numbers $a_j(e)$, with $j\in \{1,2,3\}$ and $e$ an edge of $T$.

If we take the numbers $a_j(e)$ to be the first $24=3\times 8$ prime numbers (i.e. $a_1(r,v)=2, \ldots, a_3(u,5)=89$),
the determinant of $M$ has leading coefficient $3344$.
} \end{Ex}

\begin{Rmk} {\fontshape{n}\selectfont
In order to prove Conjecture \ref{conj3} for a fixed value of $m$, one could follow the strategy of Theorem \ref{thm main}. Indeed, the number $t(m)$ of combinatorial types of equidistant $m$-trees is finite and for each of these types, one can compute the determinant of $M$ and check whether its degree equals $D$. 

In this way, we can prove Conjecture \ref{conj3} for $m=5$ using a computer algebra system. For each of the three combinatorial types of equidistant $5$-trees, the determinant of $M$ can be computed, leaving the random numbers $a_j(e)$ and the lengths $l(e)$ of the edges as variables. This determinant (considered as a polynomial in the variable $t$) has degree equal to the length $D$ of the tree $T$ and its leading coefficient is a homogeneous polynomial $c_T$ of degree $5$ in the numbers $a_j(e)$. If the tree $T$ is binary, the polynomial $c_T$ has $272$ terms for the type corresponding to Example \ref{ex m5}, and $144$ terms for the other two types. Note that the numbers $a_j(e)$ are sufficiently random if they don't vanish for the polynomial $c_T$. We can conclude that the inclusion $$\phi^{(5)}(\GT)\subset \mathcal{G}_{5,n}\cap \phi^{(5)}(\RR^{n\choose 2})$$ holds, i.e. Conjecture \ref{conj1} for $m=5$. 

On the other hand, the number $t(m)$ grows exponentially, e.g. $$t(4)=2, t(5)=3, t(6)=6, t(7)=11, t(8)=23, t(9)=46, t(10)=98, etc.,$$ and for each of these types, the square matrix $M$ is of size $m$, hence the computation of its determinant gets more complicated when $m$ grows. So this technique is not suited in order to prove Conjecture \ref{conj3} for every $m$. However, one can hope to find a proof by induction on $m$.
}\end{Rmk}

\end{document}